\newcommand{\be}{\begin{equation}}
\newcommand{\ee}{\end{equation}}
\newtheorem{theorem}{Theorem}[section]
\newtheorem{lemma}{Lemma}[section]
\newtheorem{proposition}{Proposition}[section]
\newtheorem{remark}{Remark}[section]
\newtheorem{com}{Comment}[section]
\begin{document}

\title{Markov branching process with infinite variance and non-homogeneous immigration with infinite mean}
\author{Kosto V. Mitov, \\
%EndAName
Faculty of Pharmacy, Medical University, Pleven, Bulgaria \\
email: kmitov@yahoo.com \\
Nikolay M. Yanev, \\
Institute of Mathematics and Informatics, BAS, Sofia, Bulgaria, \\
email: yanev@math.bas.bg} \maketitle

\begin{abstract}
The paper studies a class of critical Markov branching processes
with infinite variance of the offspring distribution. The processes
admit also an immigration component at the jump-points of a
non-homogeneous Poisson process, assuming that the mean number of
immigrants is infinite and the intensity of the Poisson process
converges to zero. The asymptotic behavior of the probability for
non-visiting zero is obtained. Limiting distributions are proved,
under suitable normalization of the sample paths, depending on the
offspring distribution, on the distribution of the immigrants and on
the intensity of the Poisson process.
\end{abstract}

\textbf{Key words:} Markov branching process, infinite variance,
limit theorems, non-homogeneous immigration

\textbf{2020 Mathematics Subject Classification:} Primary 60J80;
Secondary 60F05, 60J85, 62P10

%%%%%%%%%%%%%%%%%%%%%%%%%%%%%%%%%%%%%%%%%%%%%%%%%%%%%%%%%%%%%%%%%%%%%%%%%%%%%%%%%%%%%%%%%%%%%%%%%%%%%%%%%%%%%%%
%%%%% INTRODUCTION %%%%%%%%%%%%%%%%%%%%%%%%%%%%%%%%%%%%%%%%%%%%%%%%%%%%%%%%%%%%%%%%%%%%%%%%%%%%%%%%%%%%%%%%%%%%
%%%%%%%%%%%%%%%%%%%%%%%%%%%%%%%%%%%%%%%%%%%%%%%%%%%%%%%%%%%%%%%%%%%%%%%%%%%%%%%%%%%%%%%%%%%%%%%%%%%%%%%%%%%%%%%

\section{Introduction}

The paper deals with Markov branching processes with immigration in
time-moments generated by Poisson measure with a local intensity
$r(t).$ We consider the critical case when the offspring mean is
equal to one, but the offspring variance is infinite. The
distribution of immigrants belongs to the class of stable laws with
infinite mean and $r(t)$ is a regularly varying function (r.v.f.)
converging to zero. Then the considered branching processes are
non-homogeneous in time.

Recall that the first branching process with immigration was
formulated by Sevastyanov \cite{Sevastyanov57}. He investigated a
single-type Markov branching process in which immigration occurs
according to a time homogeneous Poisson process, and proved limiting
distributions. Branching processes with time non-homogeneous
immigration were first proposed by Durham \cite{Durham} and Foster
and Williamson \cite{fowi}. Further results
can be found in Badalbaev and Rahimov \cite{BadRah} and Rahimov \cite%
{Rahimov}. See also a review paper of Rahimov \cite{rah2021}. A
model with
critical non-homogeneous migration was investigated by Yanev and Mitov \cite%
{YanMit85}. Critical Sevastyanov branching processes with
non-homogeneous immigration were studied in\ \cite{mityan} and
critical multitype Markov branching processes with non-homogeneous
Poisson immigration were considered by Mitov et al. \cite{myh}.
Notice that the limiting distributions in these models were obtained
in the case of finite first and second offspring characteristics as
well as those of the immigration components.

The asymptotic behaviour of branching processes is quite different
in the case of finite or infinite offspring variance. Zolotarev
\cite{zolotarev} was first who obtained limiting distributions for
Markov branching processes with infinite offspring variance. Pakes
\cite{pakes1975}, \cite{pakes2010} investigated respectively
Bienaym\'{e}-Galton-Watson process and Markov branching process in
the critical case with infinite offspring variance and finite mean
of the immigrants, where for the continuous time case it is assumed
that the immigration occurs at time-points of a homogeneous Poisson
process. Imomov and Tukhtaev \cite{Imomov} considered critical Bienaym\'{e}%
-Galton-Watson process with infinite offspring variance and infinite
mean of immigrants and extended also some of the results of Pakes
\cite{pakes1975}. Sagitov \cite{sagitov} studied multi-type Markov
branching processes in the case of homogeneous Poisson immigration
with infinite second moments of the offspring distributions and
infinite means of the immigrants.

Branching processes with time non-homogeneous immigration find
applications for investigating the dynamics of biological systems,
particularly cellular populations (see, for example, \cite{yay1,
Hyr1, Hyr2}). In these applications, the stem cells often are
considered as an immigration component.

We have to mention that some of the results obtained here are
similar to
some of the results obtained in the discrete time case by Rahimov \cite%
{rah86,rah93mn} for Bienaym\'{e}-Galton-Watson branching processes
and this is not surprising. Let us note also that the methods of
studying in the present work are based on the functional equations
for the probability generating functions, stationary measures and
some other methods which essentially differ from the methods used in
\cite{rah86,rah93mn}.

A detailed description of the considered model is presented in
Section 2. Some preliminary results and basic assumptions are given
in Section 3. The asymptotic behavior of the $P_{t}$\ (probability
for non-visiting zero state at moment $t$) is investigated in
Section 4 (Theorems 4.1-4.3). Thus Theorem
4.1, Theorem 4.2 and Theorem 4.3-(i) establish the conditions under which $%
P_{t}$ converges to zero as $t\rightarrow \infty $ with various
rates
depending of the parameters of reproduction ($\gamma $), of immigration ($%
\alpha $) and of Poisson measure ($\theta $). Under the conditions
of Theorem 4.3-(ii) $P_{t}$ converges to a positive probability less
than 1 which is exactly calculated. Finally, at the conditions of
Theorem 4.3-(iii) we obtain that $P_{t}\rightarrow 1$ as
$t\rightarrow \infty .$

Under the same basic conditions various types of limiting
distributions are obtained in five theorems presented in Section 5.
In fact we proved eight different type limiting distributions. Note
that seven of them are under the condition for non-visiting zero
state. Thus in Theorems 5.1-(i), 5.2-(ii) and 5.4-(ii) under
suitable additional conditions we proved a stationary discrete time
limiting distribution and we obtained an integral form for the
corresponding probability generating function (p.g.f.) depending
only from the local characteristics of the process (offspring p.g.f.
$f(s)$ and the p.g.f. of the immigrants $g(s),$ see Remark 5.1). For
the other limiting distributions under a normalization with a
suitable r.v.f. we obtained the corresponding Laplace transforms.
The most interesting results are given in Theorem 5.1-(iii) where we
have two singular to each other conditional limiting distributions,
where the first one is a non-proper stationary distribution with an
atom at infinity and the second one under a suitable normalization
has an atom at zero. Finally in Theorem 5.5-(iii) we obtained non
conditional limiting distribution under a suitable normalizing
r.v.f. and the limiting random variable is just stable with
parameter $\alpha $ (from the p.g.f. of the immigrants). Comments
with discussion of the results are given after the proofs of all
theorems in Sections 4 and 5.

%%%%%%%%%%%%%%%%%%%%%%%%%%%%%%%%%%%%%%%%%%%%%%%%%%%%%%%%%%%%%%%%%%%%%%%%%%%%%%%%%%%%%%%%%%%%%%%%%%%%%%%%%%%%%%%
%%%%% DESKRIPTION OF THE MODEL %%%%%%%%%%%%%%%%%%%%%%%%%%%%%%%%%%%%%%%%%%%%%%%%%%%%%%%%%%%%%%%%%%%%%%%%%%%%%%%%
%%%%%%%%%%%%%%%%%%%%%%%%%%%%%%%%%%%%%%%%%%%%%%%%%%%%%%%%%%%%%%%%%%%%%%%%%%%%%%%%%%%%%%%%%%%%%%%%%%%%%%%%%%%%%%%

\section{Description of the models}

\label{sec2}

A Markov branching process can be described as follows. The
particles of a given type evolve independently of each other, lives
random time $\tau $ with distribution function (d.f.)
$G(t)=\mathbf{P}\left\{ \tau \leq t\right\} =1-e^{-\mu t}$, $t\geq
0,\mu >0,$ and at the end of its life the particle produces random
number $\xi \geq 0$ of new particles of the same type. The number of
particles $Z(t)$ at moment $t\geq 0 $ is known as Markov branching
process (see \cite{an}, \cite{haris}). Denote by
$h(s)=\mathbf{E}\left[ s^{\xi }\right] $ the offspring p.g.f. and
$F(t;s)=\mathbf{E}\left[ s^{Z(t)}|Z(0)=1\right] ,\ t\geq 0,\ \ s\in
[0,1].$ It is well known that (see e.g. \cite{an}, \cite{haris})
\begin{eqnarray}
&& \frac{\partial F(t;s)}{\partial t}=\mu [h\left(F(t;s)\right)
-F(t;s)],  \label{kolmo0}
\end{eqnarray}
Let $\left( S_{k},I_{k}\right) $, $k=0,1,2,\ldots $, be independent of $Z(t)$%
, where $0=S_{0}<S_{1}<S_{2}<\cdots $are jump points of an
non-homogeneous Poisson process $\nu (t)$ and the random variables
$\{I_{k}\}$\ are independent, identically distributed (i.i.d.) with
non-negative integer
values. Denote by $r(t)$ the intensity of $\nu (t)$ with a mean measure $%
\displaystyle R(t)=\int_{0}^{t}r(u)du$. Let $\displaystyle g(s)=\mathbf{E}%
\left[ s^{I_{k}}\right] $ be the p.g.f. of the immigrants.

Assume that at every jump-point $S_{k}$, a random number $I_{k}$ of
new particles immigrate into the process $Z(t)$ and they participate
in the evolution as the other particles. Let us denote the new
process by $Y(t)$. It can be strictly defined as follows
\begin{eqnarray*}
\displaystyle Y(t)=\sum_{k=1}^{\nu
(t)}\sum_{j=1}^{I_{k}}Z^{(k,j)}\left( t-S_{k}\right) ,\
\displaystyle t\geq 0,
\end{eqnarray*}%
where $\left\{ Z^{(k,j)}(t)\right\} $ are i.i.d. copies of $Z(t)$.
The process $Y(t)$, $t\geq 0,$ is called Markov branching process
with non-homogeneous Poisson immigration (MBPNPI).

For $\Phi (t;s):=\mathbf{E}\left[ s^{Y(t)}|Y(t)=0\right] $ we have
the following equation
\begin{eqnarray}
&& \Phi(t;s)=\exp\left\{-\int_0^t r(t-u)(1 - g(F(u;s)))du \right\},
\ \Phi(t;0)=1.  \label{yanev}
\end{eqnarray}
The proof is given in \cite{yay1} and in the more general multitype
case in \cite{myh}.

For the intensity of the Poisson process, we assume additionally the
following condition
\begin{eqnarray}  \label{rt-0}
\displaystyle r(t)=t^{-\theta}L_R(t), \ \mathrm{\ where \ } \ \
\theta >0,
\end{eqnarray}
and $L_R(.)$ is a slowly varying function (s.v.f.) at infinity.

%%%%%%%%%%%%%%%%%%%%%%%%%%%%%%%%%%%%%%%%%%%%%%%%%%%%%%%%%%%%%%%%%%%%%%%%%%%%%%%%%%%%%%%%%%%%%%%%%%%%%%%%%%%%%%%
%%%%% BASIC ASSUMPTIONS AND PRELIMINARIES %%%%%%%%%%%%%%%%%%%%%%%%%%%%%%%%%%%%%%%%%%%%%%%%%%%%%%%%%%%%%%%%%%%%%
%%%%%%%%%%%%%%%%%%%%%%%%%%%%%%%%%%%%%%%%%%%%%%%%%%%%%%%%%%%%%%%%%%%%%%%%%%%%%%%%%%%%%%%%%%%%%%%%%%%%%%%%%%%%%%%

\section{Basic assumptions and preliminary results}

\label{sec3}

For the branching mechanism we assume that the offspring p.g.f.
$f(s)$ has the following representation
\begin{eqnarray}  \label{infinite-var}
&& f(s)=s+(1-s)^{\gamma+1}L\left(\frac{1}{1-s}\right), \ \ \ s \in
[0,1),
\end{eqnarray}
where $\gamma \in (0,1]$ and $L(.)$ is a s.v.f. at infinity. Thus,
the process $Z(t),t\geq 0,$ is critical. If $\gamma <1$ the
offspring variance in infinite.

\begin{com}
\label{com1} If $\gamma =1$ and $L(t)\rightarrow b$ then the
offspring variance is finite. The results for this case follow
directly from the corresponding results for the multitype Markov
processes with non-homogeneous Poisson immigration studied in
\cite{myh}. If $\gamma =1$ the offspring variance can also be
infinite, depending on the properties of the slowly varying function
$L(.)$.
\end{com}

It is known (see \cite{haris}, Theorem 12.1) that a critical Markov
branching process has an invariant measure whose p.g.f. $U(s)$ is
given by $\displaystyle U(s)=\int_{0}^{s}\frac{du}{f(u)-u},\ \ \ \
0\leq s\leq 1.$ The Kolmogorov backward equation (\ref{kolmo0}) can
be written as follows $\displaystyle
\int_{s}^{F(t;s)}\frac{du}{f(u)-u}=\mu t.$ This leads to
$U(F(t;s))=U(s)+\mu t$. Denote by
\begin{eqnarray*}
V(x)=U\left( 1-\frac{1}{x}\right) =\int_{0}^{1-1/x}\frac{du}{f(u)-u}%
=\int_{1}^{x}\frac{u^{\gamma -1}}{L(u)}du,x\geq 1.
\end{eqnarray*}
So $V(x)$ is regularly varying with exponent $\gamma $. Then its
inverse $W(y)$ is regularly varying with exponent $1/\gamma .$ Let
us note that $V(x)$ and  $W(y)$ are increasing (see e.g.
\cite{pakes1975},\cite{pakes2010}). Using the above relations we get
for $\ s \in [0,1)$
\begin{eqnarray}
&& 1/[1-F(t;s)]=W\left( \mu t+V(1/(1-s)\right) ), \ \ s \in [0,1).
\label{q-asymp-2-s}
\end{eqnarray}
Substituting $s=0$ we have $1-F(t;0)=1/W(\mu t).$ For the p.g.f. of
the immigrants we will assume that
\begin{eqnarray}
&& g(s)=1-(1-s)^\alpha l\left(\frac{1}{1-s}\right), \ \ s \in [0,1]
\label{im-fin}
\end{eqnarray}
where $\alpha \in (0,1]$ and $l(x)$ is a s.v.f. at infinity.

\begin{com}
\label{com2} If $\alpha \in (0,1)$ the mean number of immigrants is
infinite. In the case when $\alpha=1$ the mean number of immigrants
can be infinite or finite depending on the s.v.f. $l(.)$. If
$\alpha=1$, and $l(x) \to m \in (0,\infty),$ then
$\mathbf{E}[I_k]=m$ is finite.
\end{com}

Let us denote $\displaystyle \Psi
(x)=1/[1-g(1-\frac{1}{x})]=\frac{x^{\alpha }}{l(x)},\ x\geq 1.$  The
function $\Psi (.)$ is non decreasing in $[1,\infty )$ and let
$\displaystyle \overleftarrow{\Psi }(x),\ x\geq 1,$ be its inverse
function which is also non-decreasing in $[1,\infty )$. Then $g(s)$
can be written in the following form
\begin{eqnarray*}
g(s)=1-1/\Psi \left( 1/(1-s)\right) ,\ \ s \in [0,1].
\end{eqnarray*}
Further for convenience we will denote (see also
(\ref{q-asymp-2-s}))
\begin{eqnarray}
&&q(t;s):=1-g(F(t;s))=(1-F(t;s))^{\alpha }l\left( 1/[1-F(t;s)]\right)  \notag \\
&=& 1/\Psi \left( W\left( \mu t+V\left( 1/(1-s)\right) \right)
\right).\label{qts}
\end{eqnarray}
From (\ref{qts}) with $s=0$ it follows that as $t\rightarrow \infty
$,
\begin{eqnarray}
\label{gFt} q(t):=q(t;0)= \frac{1}{\Psi\left( W\left( \mu
t\right)\right)} \sim t^{-\alpha/\gamma}L_Q(t),
\end{eqnarray}
where $L_Q(.)$ is s.v.f. at infinity.

We will use the following notations
\begin{eqnarray*}
Q(t):=\int_{0}^{t}q(u)du,\ Q:=\int_{0}^{\infty }q(u)du,\
\Delta(s):=\int_{0}^{\infty }q(u;s)du, s\in [0,1],
\end{eqnarray*}
when the last two integrals converge.
\begin{proposition} \label{proposition} The following representations hold
\begin{eqnarray}
&& \label{QDeltas} \Delta (s)=\int_{s}^{1}\frac{1-g(u)}{\mu
(f(u)-u)}du, \ Q=\int_{0}^{1}\frac{1-g(u)}{\mu (f(u)-u)}du.
\end{eqnarray}
\end{proposition}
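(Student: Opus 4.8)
The plan is to evaluate the integral $\Delta(s) = \int_0^\infty q(u;s)\,du$ by a change of variables that turns it into the stated integral over the unit interval. The key observation is formula (\ref{qts}), which says $q(u;s) = 1 - g(F(u;s))$; so the integrand is literally $1 - g$ evaluated along the branching trajectory $u \mapsto F(u;s)$. The natural move is therefore to substitute $v = F(u;s)$ inside the integral. For this I need $du$ in terms of $dv$, which comes directly from the Kolmogorov equation (\ref{kolmo0}): since $\partial F(u;s)/\partial u = \mu[h(F(u;s)) - F(u;s)] = \mu[f(F(u;s)) - F(u;s)]$ (using the paper's notation $f$ for the offspring p.g.f.), we get $du = \dfrac{dv}{\mu(f(v) - v)}$.

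The next step is to track the limits of integration. As $u = 0$ we have $F(0;s) = s$, so the lower limit becomes $v = s$. As $u \to \infty$, criticality gives $F(u;s) \to 1$, so the upper limit becomes $v = 1$; equivalently one can see this from (\ref{q-asymp-2-s}), since $V$ and $W$ are regularly varying and increasing to infinity, $1 - F(u;s) \to 0$. Moreover $F(\cdot;s)$ is strictly increasing in $u$ on $[0,\infty)$ (again because $f(v) - v > 0$ for $v \in [0,1)$ in the critical case), so the substitution is a genuine bijection from $[0,\infty)$ onto $[s,1)$ and is legitimate. Carrying it out yields
\begin{eqnarray*}
\Delta(s) = \int_0^\infty \bigl(1 - g(F(u;s))\bigr)\,du = \int_s^1 \frac{1 - g(v)}{\mu(f(v) - v)}\,dv,
\end{eqnarray*}
and setting $s = 0$ gives the formula for $Q$.

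The one point requiring a little care — and the only real obstacle — is justifying that the integrals actually converge, i.e. that the right-hand side is finite (which the Proposition's phrasing presupposes "when the last two integrals converge", so strictly this is a conditional identity, but one still wants to check the two sides are finite together). Near $v = 1$ one has $f(v) - v \sim (1-v)^{\gamma+1}L(1/(1-v))$ from (\ref{infinite-var}) and $1 - g(v) = (1-v)^\alpha l(1/(1-v))$ from (\ref{im-fin}), so the integrand behaves like $(1-v)^{\alpha - \gamma - 1}$ up to slowly varying factors; by Karamata's theorem this is integrable at $1$ precisely when $\alpha < \gamma$ (with the boundary case $\alpha = \gamma$ depending on the slowly varying parts), which is exactly the same condition under which $q(u) \sim u^{-\alpha/\gamma}L_Q(u)$ from (\ref{gFt}) makes $Q = \int_0^\infty q(u)\,du$ converge, since $\alpha/\gamma > 1 \iff \alpha > \gamma$ — wait, this needs the reverse, so the matching is $\alpha/\gamma > 1$, i.e. the convergence of $Q$ corresponds to $\alpha < \gamma$ only if one reads the exponent correctly; in any case the substitution argument shows the two integrals are equal as improper integrals, so one converges if and only if the other does, and no separate convergence analysis is logically needed for the identity itself. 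I would present the change of variables as the whole proof, remarking only briefly that monotonicity of $F(\cdot;s)$ and the endpoint values $F(0;s)=s$, $F(\infty;s)=1$ justify the limits, and that finiteness of either side is equivalent to finiteness of the other.
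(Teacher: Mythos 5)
Your proof is correct, but it takes a genuinely different route from the paper. You substitute $v=F(u;s)$ directly in $\Delta(s)=\int_0^\infty\bigl(1-g(F(u;s))\bigr)\,du$, using the backward Kolmogorov equation $\partial_u F(u;s)=\mu\bigl(f(F(u;s))-F(u;s)\bigr)$ together with the strict monotonicity of $F(\cdot;s)$ and the endpoint values $F(0;s)=s$, $F(u;s)\uparrow 1$; for each finite $T$ this gives $\int_0^T\bigl(1-g(F(u;s))\bigr)\,du=\int_s^{F(T;s)}\frac{1-g(v)}{\mu(f(v)-v)}\,dv$, and letting $T\to\infty$ yields the identity as an equality of improper integrals (each side finite iff the other is), with $s=0$ giving the formula for $Q$. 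The paper instead differentiates $\Delta(s)$ in $s$ under the integral sign (justified by the uniform convergence coming from monotonicity of $q(t;s)$ in $s$ and $Q<\infty$), uses the \emph{forward} equation $\partial_t F=f^{\ast}(s)\,\partial_s F$ with $f^{\ast}(s)=\mu(f(s)-s)$ to trade the $s$-derivative for a $t$-derivative, integrates out $t$ to obtain $\Delta'(s)=-(1-g(s))/f^{\ast}(s)$, and then integrates in $s$ using $\Delta(1)=0$ and $\Delta(0)=Q$. Your change of variables is more elementary: it avoids differentiation under the integral and the attendant uniform-convergence step, and it establishes the identity without assuming convergence in advance, whereas the paper's route packages the same computation as a differential identity for $\Delta$. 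One small slip in your (admittedly inessential) convergence aside: integrability of $(1-v)^{\alpha-\gamma-1}$, up to slowly varying factors, at $v=1$ requires $\alpha>\gamma$, not $\alpha<\gamma$; this then matches convergence of $\int^\infty u^{-\alpha/\gamma}L_Q(u)\,du$ for $\alpha/\gamma>1$, and, as you correctly note, the identity itself needs no separate convergence analysis.
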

\begin{proof} Since $q(t;s)$ is non-increasing in $s\in [0,1]$ then the
convergence of $\displaystyle Q=\int_{0}^{\infty }q(t)dt$ leads to
the uniform convergence of $\displaystyle\Delta (s)=\int_{0}^{\infty
}q(t;s)dt$ on $[0,1]$. Then $\displaystyle \frac{d}{ds}\Delta
(s)=-\int_{0}^{\infty }\frac{d}{dF}g(F(t;s))\frac{\partial
}{\partial s}F(t;s)dt.$ Note that by the forward Kolmogorov equation
$\displaystyle \frac{\partial }{\partial t}F(t;s)=f^{\ast
}(s)\frac{\partial }{\partial s} F(t;s),\ \ \ F(0;s)=s,$ where
$f^{\ast }(s)=\mu (f(s)-s)$ is the infinitesimal generating
function. Therefore
\begin{eqnarray*}
\frac{d}{ds}\Delta (s) &=&-\frac{1}{f^{\ast }(s)}\int_{0}^{\infty
}\frac{d}{dF}g(F(t;s))\frac{\partial }{\partial t}F(t;s)dt
=-\frac{1}{f^{\ast }(s)}\int_{0}^{\infty }d_{t}g(F(t;s)) \\
&=&-\frac{1}{f^{\ast }(s)}[g(F(\infty
;s))-g(F(0;s))]=-\frac{1}{f^{\ast }(s)}[1-g(s)].
\end{eqnarray*}
Hence by integrating from $s$ to $1$ and using that\ $\Delta (1)=0$
one obtains  the first equation in (\ref{QDeltas}). On the other
hand, by integrating form $0$ to $s$ one obtains $\displaystyle
\Delta (s)-\Delta (0)=-\int_{0}^{s}\frac{1-g(u)}{f^{\ast }(u)}du.$
Since $\Delta (0)=Q$ then  the second equation in (\ref{QDeltas})
follows.
\end{proof}
\begin{lemma}
\label{lem1} For $s\in [0,1)$ it is fulfilled that $%
q(t;s)/q(t)\rightarrow 1$ as $t\rightarrow \infty .$
\end{lemma}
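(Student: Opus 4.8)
The plan is to express the ratio $q(t;s)/q(t)$ using the explicit formula (\ref{qts}) and then track the asymptotics of each factor via Karamata-type arguments. From (\ref{qts}) and (\ref{gFt}),
\[
\frac{q(t;s)}{q(t)}=\frac{\Psi\!\left(W(\mu t)\right)}{\Psi\!\left(W\!\left(\mu t+V(1/(1-s))\right)\right)}.
\]
Since $s\in[0,1)$ is fixed, $V(1/(1-s))$ is a fixed finite constant, call it $c_s\ge 0$; so the argument of $W$ in the denominator is $\mu t+c_s$, which is $\mu t\,(1+o(1))$ as $t\to\infty$. First I would use that $W$ is regularly varying with index $1/\gamma$ (stated in the excerpt) to conclude $W(\mu t+c_s)/W(\mu t)\to 1$; more carefully, regular variation gives $W(\mu t+c_s)=W(\mu t)\cdot\big((\mu t+c_s)/(\mu t)\big)^{1/\gamma}(1+o(1))\to W(\mu t)\cdot(1+o(1))$, and in particular both $W(\mu t)$ and $W(\mu t+c_s)$ tend to infinity.

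Next I would handle the outer function $\Psi$. Recall $\Psi(x)=x^\alpha/l(x)$ is regularly varying with index $\alpha>0$, hence also $\Psi(x)\to\infty$ and $\Psi$ is asymptotically monotone. Writing $x_t:=W(\mu t)\to\infty$ and $y_t:=W(\mu t+c_s)\to\infty$ with $y_t/x_t\to 1$, the uniform convergence theorem for slowly varying functions (applied to $l$) gives $l(y_t)/l(x_t)\to 1$, and therefore
\[
\frac{\Psi(x_t)}{\Psi(y_t)}=\left(\frac{x_t}{y_t}\right)^{\alpha}\cdot\frac{l(y_t)}{l(x_t)}\longrightarrow 1\cdot 1=1,
\]
which is exactly the claim $q(t;s)/q(t)\to 1$.

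The only mild obstacle is being careful that $W$ and $\Psi$ need only be \emph{asymptotically} monotone (the excerpt says they are non-decreasing, so this is granted) and that $l$ is slowly varying so that the ratio $l(y_t)/l(x_t)\to 1$ whenever $y_t/x_t\to 1$ with both arguments $\to\infty$ — this is a standard consequence of the uniform convergence theorem for s.v.f.'s, and it is the one place where one should not simply "cancel" the slowly varying parts without justification. Everything else is bookkeeping with the definitions (\ref{qts}), (\ref{gFt}) and the regular-variation indices of $V$, $W$, $\Psi$ already recorded in Section 3.
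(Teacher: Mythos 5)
Your proof is correct and follows essentially the same route as the paper: both reduce the claim to the fact that the argument $\mu t+V(1/(1-s))$ is $\mu t$ times a factor tending to $1$, and then invoke the uniform convergence theorem for regularly varying functions. The only cosmetic difference is that you apply it in two stages (first to $W$, then to the slowly varying part of $\Psi$), whereas the paper applies it once to the composition $\Psi(W(\cdot))$, which is regularly varying of index $\alpha/\gamma$.
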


\begin{proof} From (\ref{qts}) we have
\begin{eqnarray*}
q(t;s)=\frac{1}{\Psi \left( W\left( \mu t+V\left(
\frac{1}{1-s}\right)
\right) \right) }=\frac{1}{\Psi \left( W\left( \mu t\left( 1+\frac{1}{\mu t}%
V\left( \frac{1}{1-s}\right) \right) \right) \right) }.
\end{eqnarray*}%
Since, $s$ is fixed then for every $t$ large enough $1+\frac{1}{\mu t}%
V\left( \frac{1}{1-s}\right) \in [1,2]$. Therefore, by the uniform
convergence of regularly varying functions on compact sets (see
\cite{bgt}, Theorem 1.5.2), it follows that $\displaystyle \Psi
\left( W\left( \mu t\left( 1+V\left( 1/(1-s)\right) /\mu t\right)
\right) \right) \sim \Psi (W(\mu t)),t\rightarrow \infty, $ which
together with (\ref{qts}) and (\ref{gFt}) completes the proof.
\end{proof}

\begin{lemma}
\label{lem2} Let $s(t)=\exp(-\lambda /W(\mu t)),$  $\lambda >0.$
Then for $c>0$,
\begin{eqnarray*}
&& q(ct;s(t))/q(t)\rightarrow (c+\lambda ^{-\gamma })^{-\alpha
/\gamma }, \ t\rightarrow \infty.
\end{eqnarray*}
\end{lemma}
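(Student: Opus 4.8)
The plan is to reduce the statement to standard manipulations with regularly varying functions, exploiting that $V$, $W$ and $\Psi$ are all regularly varying and monotone, together with the uniform convergence theorem for regularly varying functions (\cite{bgt}, Theorem 1.5.2), exactly as in the proof of Lemma~\ref{lem1}.

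First I would estimate $1-s(t)$. Since $W$ is increasing and regularly varying with positive index $1/\gamma$, we have $W(\mu t)\to\infty$, so $\lambda/W(\mu t)\to 0$ and hence $1-s(t)=1-e^{-\lambda/W(\mu t)}\sim \lambda/W(\mu t)$, i.e. $1/(1-s(t))\sim W(\mu t)/\lambda\to\infty$. Next I would feed this estimate into $V$: since $V$ is regularly varying with index $\gamma$ and $V(W(y))=y$ (because $W$ is the inverse of the continuous increasing function $V$), the uniform convergence theorem gives $V(1/(1-s(t)))\sim V(W(\mu t)/\lambda)\sim \lambda^{-\gamma}V(W(\mu t))=\lambda^{-\gamma}\mu t$. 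Consequently the argument occurring in $q(ct;s(t))$ satisfies $\mu ct+V(1/(1-s(t)))=\mu t(c+\lambda^{-\gamma})(1+o(1))$ as $t\to\infty$.

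Finally, recalling from (\ref{qts}) and (\ref{gFt}) that $q(ct;s(t))=1/\Psi(W(\mu ct+V(1/(1-s(t)))))$ and $q(t)=1/\Psi(W(\mu t))$, and noting that the composition $\Psi\circ W$ is regularly varying with index $\alpha/\gamma$ (and monotone), one further application of the uniform convergence theorem to the previous display would give
\[
\frac{q(ct;s(t))}{q(t)}=\frac{\Psi(W(\mu t))}{\Psi\!\big(W(\mu t(c+\lambda^{-\gamma})(1+o(1)))\big)}\longrightarrow (c+\lambda^{-\gamma})^{-\alpha/\gamma},\qquad t\to\infty .
\]
The only point requiring care is the legitimacy of propagating each asymptotic equivalence through the monotone regularly varying functions $V$ and $\Psi\circ W$; this is precisely where the uniform convergence theorem is invoked, after checking (as in Lemma~\ref{lem1}) that in each step the relevant ratio of arguments stays in a fixed compact subset of $(0,\infty)$ for all large $t$, so that $o(1)$ perturbations of the argument produce only $o(1)$ perturbations of the value.
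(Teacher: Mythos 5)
Your proposal is correct and follows essentially the same route as the paper: estimate $1-s(t)\sim\lambda/W(\mu t)$, use that $V$ is the inverse of $W$ and regularly varying of index $\gamma$ to get $V(1/(1-s(t)))\sim\lambda^{-\gamma}\mu t$, and then push the resulting argument $\mu t(c+\lambda^{-\gamma})(1+o(1))$ through the regularly varying composition $\Psi\circ W$ (the paper applies $W$ and $\Psi$ in two steps rather than as one composition of index $\alpha/\gamma$, which is an immaterial difference). Your explicit appeal to the uniform convergence theorem to absorb the $(1+o(1))$ perturbations is exactly the justification implicit in the paper's chain of asymptotic equivalences.
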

\begin{proof} Note first that $\displaystyle1-s(t)\sim \lambda /W(\mu t),\
t\rightarrow \infty .$ Since $V$ and $W$ are inverse to each other
then $\displaystyle V\left( [1-s(t)]^{-1}\right) \sim V(\lambda
^{-1}W(\mu t))\sim \lambda ^{-\gamma }\mu t,\ \ t\rightarrow
\infty.$ Now from (\ref{qts}) we have as $t\rightarrow \infty $
\begin{eqnarray*}
&&q(ct;s(t))=1/\Psi \left( W\left( c\mu t+V\left(
[1-s(t)]^{-1}\right)
\right) \right) \\
&\sim &1/\Psi \left( W\left( c\mu t+\lambda ^{-\gamma }\mu t\right)
\right) \sim 1/\Psi \left( W\left( \mu t)(c+\lambda ^{-\gamma
}\right) ^{1/\gamma
}\right) \\
&\sim &\frac{1}{\Psi (W( \mu t))}\left(c+\lambda ^{-\gamma
}\right)^{-\alpha /\gamma}=q(t)(c+\lambda ^{-\gamma })^{-\alpha
/\gamma },
\end{eqnarray*}%
because of (\ref{gFt}) and the fact that $W(t)$ and $\Psi (t)$ are
r.v.f. with exponents $1/\gamma $ and $\alpha $ respectively.
\end{proof}
%%%%%%%%%%%%%%%%%%%%%%%%%%%%%%%%%%%%%%%%%%%%%%%%%%%%%%%%%%%%%%%%%%%%%%%%%%%%%%%%%%%%%%%%%%%%%%%%%%%%%%%%%%%%%%%
%%%%% PROBABILITY FOR NON_EXTINCTION %%%%%%%%%%%%%%%%%%%%%%%%%%%%%%%%%%%%%%%%%%%%%%%%%%%%%%%%%%%%%%%%%%%%%%%%%%
%%%%%%%%%%%%%%%%%%%%%%%%%%%%%%%%%%%%%%%%%%%%%%%%%%%%%%%%%%%%%%%%%%%%%%%%%%%%%%%%%%%%%%%%%%%%%%%%%%%%%%%%%%%%%%%

\section{Asymptotic behavior of the probability for non-visiting the state
zero}

\label{sec4}

Let us denote
\begin{eqnarray}
\label{yanev1} &&P_t:=\mathbf{P}\{Y(t)>0\}=1-\Phi(t;0)
=1-\exp(-I(t)),
\end{eqnarray}
where $\displaystyle I(t)=\int_{0}^{t}r(t-u)q(u)du$ (see (\ref{qts})
with $s=0$ and (\ref{gFt})).

%%%%%%%%%%%%%%%%%%%%%%%%%%% THEOREM 4.1%%%%%%%%%%%%%%%%%%%%%%%%%%%%%%%%%%%%%%%%%%%%%%%%%%%%%%%%%%%%%%%%%%%

\begin{theorem}
\label{thm4-1} Let conditions (\ref{rt-0}), (\ref{infinite-var}) ,
and (\ref{im-fin}) hold. Assume also that in (\ref{rt-0}) $\theta
\geq 1$ and $\alpha/\gamma \geq 1$. Then
\begin{eqnarray}
&&\label{dt-asimp0} \mathbf{P}\{Y(t)>0\} \sim  R(t)q(t)+ Q(t)r(t),\
\   \ t \to \infty.
\end{eqnarray}
\end{theorem}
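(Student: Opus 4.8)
The plan is to analyze the convolution integral $I(t)=\int_0^t r(t-u)q(u)\,du$ directly and show that, since $P_t = 1-e^{-I(t)}$, it suffices to prove $I(t)\sim R(t)q(t)+Q(t)r(t)$ together with $I(t)\to 0$, which then gives $P_t\sim I(t)$ via $1-e^{-x}\sim x$ as $x\to 0$. The inputs I would use are: $r(t)=t^{-\theta}L_R(t)$ with $\theta\ge 1$ (so $r$ is regularly varying with non-positive exponent, hence $R(t)=\int_0^t r(u)\,du$ is slowly varying if $\theta=1$ and bounded if $\theta>1$), and $q(t)\sim t^{-\alpha/\gamma}L_Q(t)$ with $\alpha/\gamma\ge 1$ (so similarly $Q(t)$ is slowly varying or bounded, and in particular $Q=\int_0^\infty q(u)\,du$ may or may not converge, but $Q(t)$ grows at most slowly). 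Since both exponents are $\le -1$, the product $r(t)q(t)$ is regularly varying with exponent $\le -2<-1$, so it is integrable at infinity, which is the structural reason behind the claimed asymptotic: the convolution is dominated by its two endpoint contributions.

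Concretely, I would split the convolution at a midpoint, writing
\[
I(t)=\int_0^{t/2} r(t-u)q(u)\,du + \int_{t/2}^{t} r(t-u)q(u)\,du.
\]
In the first integral $u$ ranges over $[0,t/2]$ so $t-u\in[t/2,t]$, and by the uniform convergence theorem for regularly varying functions (Theorem 1.5.2 in \cite{bgt}) $r(t-u)\sim r(t)$ uniformly on this range; this gives $\int_0^{t/2} r(t-u)q(u)\,du \sim r(t)\int_0^{t/2}q(u)\,du \sim r(t)Q(t)$, using that $Q(t/2)\sim Q(t)$ when $Q(t)$ is slowly varying (and trivially when it converges to $Q$). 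Symmetrically, in the second integral substitute $v=t-u$, so $v\in[0,t/2]$ and $u=t-v\in[t/2,t]$, giving $\int_{t/2}^t r(t-u)q(u)\,du = \int_0^{t/2} r(v)q(t-v)\,dv \sim q(t)\int_0^{t/2}r(v)\,dv \sim q(t)R(t)$ by the same argument applied to $q$. Adding the two pieces yields $I(t)\sim R(t)q(t)+Q(t)r(t)$. One then checks $I(t)\to 0$: both $R(t)q(t)$ and $Q(t)r(t)$ tend to zero because a slowly varying (or bounded) function times a regularly varying function of negative index tends to zero.

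The main obstacle is the overlap/double-counting issue near the split point and, more delicately, making sure the two error terms are genuinely negligible relative to the (possibly both non-negligible) main terms $R(t)q(t)$ and $Q(t)r(t)$, since a priori neither dominates the other. The clean way to handle this is not to split at $t/2$ but to prove the two-sided bound: for any $\varepsilon>0$ and $t$ large, $r(t-u)\le(1+\varepsilon)r(t)$ for $u\le t/2$ and $q(t-u)\le(1+\varepsilon)q(t)$ for $u\le t/2$ (Potter-type bounds, \cite{bgt} Theorem 1.5.6), so that $I(t)\le(1+\varepsilon)\bigl(r(t)Q(t/2)+q(t)R(t/2)\bigr)\le (1+\varepsilon)\bigl(r(t)Q(t)+q(t)R(t)\bigr)$, and a matching lower bound $I(t)\ge\int_0^{t/2}r(t-u)q(u)\,du\ge(1-\varepsilon)r(t)Q(t/2)$ together with the symmetric lower bound; combining gives the asymptotic equivalence. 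A secondary technical point worth addressing is the behavior near $u=0$ in $q(u)$ and near $t-u=0$ in $r(t-u)$: one should note that $q(u)\le 1$ is bounded (it is a probability-type quantity, $q(u;0)=1-g(F(u;0))\in[0,1]$) and that $R$ is the integral of an integrable-near-$0$ intensity, so no singularity arises and the tail/head contributions from the complementary regions are controlled by the integrability of $r(t-u)q(u)$ established above.
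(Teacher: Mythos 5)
There is a genuine gap in your upper bound. Splitting at $t/2$ and invoking the uniform convergence theorem to claim ``$r(t-u)\sim r(t)$ uniformly for $u\in[0,t/2]$'' is incorrect: uniform convergence gives $r(t-u)/r(t)\to(1-u/t)^{-\theta}$ uniformly for $u/t$ in compact subsets of $[0,1)$, and this limit ranges up to $2^{\theta}\geq 2$ on your interval, not $1$. Your proposed repair via ``Potter-type bounds'' fails for the same reason: Potter's theorem at the fixed ratio $1/2$ yields $r(t-u)\leq A\,2^{\theta+\delta}r(t)$, not $(1+\varepsilon)r(t)$, since $r(t/2)/r(t)\to 2^{\theta}$. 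Consequently your split-at-$t/2$ argument only sandwiches $I(t)$ between roughly $r(t)Q(t)+q(t)R(t)$ and $2^{\theta}r(t)Q(t)+2^{\alpha/\gamma}q(t)R(t)$, which does not give the asymptotic equivalence --- and you cannot discard the discrepancy, because the two main terms $r(t)Q(t)$ and $q(t)R(t)$ may be of exactly the same order (e.g.\ $\theta=\alpha/\gamma$), so neither side can absorb a constant-factor error.

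The missing idea (and the paper's route) is to split at $t\delta$ and $t(1-\delta)$ for small $\delta$, not at $t/2$. On $[0,t\delta]$ one has $t-u\in[t(1-\delta),t]$, so the first piece is sandwiched between $(1+o(1))\,r(t)Q(t\delta)$ and $(1+o(1))\,(1-\delta)^{-\theta}r(t)Q(t\delta)$, i.e.\ within a factor $(1-\delta)^{-\theta}$ of $r(t)Q(t)$ that can be made arbitrarily close to $1$; symmetrically for the last piece and $q(t)R(t)$. The crucial extra step, absent from your proposal, is the middle interval: it is bounded by $t\,r(t\delta)q(t\delta)=O\bigl(t\,r(t)q(t)\bigr)$, and since $tr(t)=o(R(t))$ and $tq(t)=o(Q(t))$ (Bingham--Goldie--Teugels, Proposition 1.5.8, valid because $R$ and $Q$ are slowly varying here), this is $o\bigl(r(t)Q(t)\bigr)$ and $o\bigl(q(t)R(t)\bigr)$ simultaneously. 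Letting $\delta\downarrow 0$ after taking $\liminf$ and $\limsup$ then yields $I(t)\sim r(t)Q(t)+q(t)R(t)$. Your final steps ($I(t)\to 0$, hence $P_t=1-e^{-I(t)}\sim I(t)$, and the remark that $q\leq 1$ so no singularities occur) are fine and agree with the paper, but as written the core asymptotic for $I(t)$ is not established.
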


\begin{proof} Note that $R(t)$ and $Q(t)$ are s.v.f. at infinity, $tq(t)=o(Q(t))$
and $tr(t)=o(R(t))$ as $t\rightarrow \infty $ (see \cite{bgt},
Proposition 1.5.8 Eq (1.5.8)). Let $\delta \in (0,1/2)$ be fixed.
Then we have
\begin{eqnarray*}
I(t)=\int_{0}^{t}r(t-u)q(u)du=\int_{0}^{t\delta }+\int_{t\delta
}^{t(1-\delta )}+\int_{t(1-\delta )}^{t}=I_{1}(t)+I_{2}(t)+I_{3}(t).
\end{eqnarray*}%
Note first that
\begin{eqnarray*}
&&I_{1}(t)\leq r(t(1-\delta ))\int_{0}^{t\delta }q(u)du\sim
r(t)(1-\delta
)^{-1}Q(t), \\
&&I_{1}(t)\geq r(t)\int_{0}^{t\delta }q(u)du\sim r(t).Q(t),\
t\rightarrow \infty .
\end{eqnarray*}
Then
\begin{eqnarray}
&&\label{I1-1} 1 \le \liminf_{t \to \infty} \frac{I_1(t)}{r(t).Q(t)}
\le \limsup_{t \to \infty}\frac{I_1(t)}{r(t).Q(t)} \le
(1-\delta)^{-\theta}.
\end{eqnarray}
 Since
\begin{eqnarray*}
I_{3}(t) &\leq &q(t(1-\delta ))\int_{0}^{t\delta }r(u)du\sim
q(t)(1-\delta
)^{-\alpha /\gamma }R(t), \\
&&I_{3}(t)\geq q(t)\int_{0}^{t\delta }r(u)du\sim q(t).R(t),\
t\rightarrow \infty ,
\end{eqnarray*}%
then
\begin{eqnarray}
&&\label{I3-1} 1 \le \liminf_{t \to \infty} \frac{I_3(t)}{q(t).R(t)}
\le \limsup_{t \to \infty}\frac{I_3(t)}{q(t).R(t)} \le
(1-\delta)^{-\alpha/\gamma}.
\end{eqnarray} Finally, for $I_2(t)$ we have
\begin{eqnarray*}
0 &\le& I_2(t)=\int_{t \delta}^{t(1-\delta)}r(t-u)q(u)du \\
&\le&
 r(t\delta)q(t\delta)t(1-2\delta) \sim t.r(t).q(t)\delta^{-2}(1-2 \delta), t \to \infty. \nonumber
\end{eqnarray*}
Since $tr(t)=o(R(t))$ and $tq(t) =o(Q(t))$, as $t \to \infty$ then
\begin{eqnarray}
&& \label{I2-1} I_2(t)=o(q(t).R(t)), \ \ \ I_2(t)=o(r(t).Q(t)), \ t
\to \infty.
\end{eqnarray}
Having in mind that $\delta \in (0,1/2)$ was arbitrary,   we
conclude from(\ref{I1-1}), (\ref{I3-1}), and (\ref{I2-1})
\begin{eqnarray*}
&& I(t) =I_1(t)+I_2(t)+I_3(t) \sim r(t).Q(t)+q(t).R(t) \to 0, \ t
\to \infty,
\end{eqnarray*}
which together with (\ref{yanev1}) and $1-e^{-x} = x(1+o(1)), \ \ x
\to 0$ proves (\ref{dt-asimp0}).
\end{proof}

\begin{com}
\label{com41} If $\theta >\frac{\alpha }{\gamma }\geq 1$ then
$R<\infty $
and $P_{t}\sim RL_{Q}(t)t^{-\alpha /\gamma }.$ If $\frac{\alpha }{\gamma}%
>\theta \geq 1$ then $Q<\infty $ and $P_{t}\sim QL_{R}(t)t^{-\theta }.$ If $%
\theta = \frac{\alpha }{\gamma }>1$ then $P_{t}\sim
[RL_{Q}(t)+QL_{R}(t)]t^{-\theta }.$ If $\theta =\frac{\alpha
}{\gamma }=1$ then $R(t)$ and $Q(t)$ are s.v.f. and $P_{t}\sim
\lbrack R(t)L_{Q}(t)+Q(t)L_{R}(t)]t^{-1}.$
\end{com}

%%%%%%%%%%%%%%%%%%%%%%%%%%% THEOREM 4.2%%%%%%%%%%%%%%%%%%%%%%%%%%%%%%%%%%%%%%%%%%%%%%%%%%%%%%%%%%%%%%%%%%%%%%%%%%

\begin{theorem}
\label{thm4-2} Assume the conditions(\ref{rt-0}), (\ref{infinite-var}), and (%
\ref{im-fin}) hold.

\bigskip
(i) If $\theta \geq 1$ and $0<\alpha /\gamma <1$ then
$\mathbf{P}\{Y(t)>0\}\sim R(t).q(t),\ t\rightarrow \infty .$

\bigskip
(ii) If $0<\theta <1$ and $\alpha /\gamma \geq 1$ then
$\mathbf{P}\{Y(t)>0\}\sim r(t).Q(t),t\rightarrow \infty .$
\end{theorem}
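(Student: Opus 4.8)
The plan follows the same decomposition as in the proof of Theorem~\ref{thm4-1}, but now the two regimes are asymmetric: in case~(i) the immigration tail $q(t)\sim t^{-\alpha/\gamma}L_Q(t)$ is \emph{not} integrable (since $\alpha/\gamma<1$), so $Q(t)\to\infty$ is a genuine r.v.f.\ of positive index, while $R(t)$ may be slowly varying or divergent depending on $\theta$. The key observation is that in this regime the middle and "light" boundary pieces become negligible against $R(t)q(t)$, and one term dominates. So I would again write $I(t)=\int_0^t r(t-u)q(u)\,du=I_1(t)+I_2(t)+I_3(t)$ with the same splitting at $t\delta$ and $t(1-\delta)$, estimate each piece by pulling the slowly/regularly varying factors out at the endpoints, and then let $\delta\downarrow 0$.

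For case~(i): the piece $I_3(t)=\int_{t(1-\delta)}^t r(t-u)q(u)\,du$ is handled exactly as in Theorem~\ref{thm4-1} — bound $q(u)$ between $q(t)$ and $q(t(1-\delta))\sim q(t)(1-\delta)^{-\alpha/\gamma}$ and integrate $r$ over $[0,t\delta]$, giving $q(t)R(t)\bigl(1+O(\delta)\bigr)$ via $\int_0^{t\delta}r(u)\,du\sim R(t)$ (here one uses that $R$ is a r.v.f.\ of index $1-\theta\le 0$, so $R(t\delta)\sim \delta^{1-\theta}R(t)$; when $\theta=1$, $R$ is slowly varying and $R(t\delta)\sim R(t)$). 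The piece $I_1(t)=\int_0^{t\delta}r(t-u)q(u)\,du\le r(t(1-\delta))\int_0^{t\delta}q(u)\,du$: now $\int_0^{t\delta}q(u)\,du\sim \frac{(\delta)^{1-\alpha/\gamma}}{1-\alpha/\gamma}\,tq(t)\cdot\text{(const)}$ up to the usual Karamata constant, i.e.\ $\int_0^{t\delta}q(u)\,du = O\bigl(t q(t)\bigr)$, and since $r(t(1-\delta))\sim (1-\delta)^{-\theta}r(t)$ one gets $I_1(t)=O\bigl(t\,r(t)\,q(t)\bigr)$. The decisive point is then that $t r(t)=o(R(t))$ (true for $\theta\ge 1$ by \cite{bgt}, Prop.~1.5.8), hence $I_1(t)=o\bigl(R(t)q(t)\bigr)$. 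The middle piece $I_2(t)\le r(t\delta)q(t\delta)\,t(1-2\delta)\sim \text{const}\cdot t\,r(t)\,q(t)$, which is again $o(R(t)q(t))$ for the same reason. Collecting, $I(t)\sim R(t)q(t)$ after sending $\delta\to 0$, and $P_t=1-e^{-I(t)}\sim I(t)$ because $I(t)\to 0$ (valid since $\theta\ge 1$ forces $R(t)q(t)\to 0$ when $\alpha/\gamma<1$; indeed $R(t)q(t)$ is r.v.f.\ of index $\le -\alpha/\gamma<0$ when $\theta>1$, and slowly varying times $t^{-\alpha/\gamma}$ when $\theta=1$, hence $\to 0$).

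Case~(ii) is the mirror image: now $\alpha/\gamma\ge 1$ so $Q(t)$ is slowly varying (or $Q<\infty$) and $q(t)$ is "light", while $0<\theta<1$ makes $r(t)\sim t^{-\theta}L_R(t)$ non-integrable with $R(t)$ a r.v.f.\ of positive index $1-\theta$. Swap the roles of $r$ and $q$: now $I_1(t)$ is the dominant piece, bounded between $r(t)\int_0^{t\delta}q(u)\,du\sim r(t)Q(t)$ from below and $r(t(1-\delta))\int_0^{t\delta}q(u)\,du\sim (1-\delta)^{-\theta}r(t)Q(t)$ from above (using $\int_0^{t\delta}q(u)\,du\sim Q(t)$, valid because $Q$ is slowly varying when $\alpha/\gamma\ge1$; if $\alpha/\gamma=1$ one still has $Q(t)$ s.v.f.\ and $Q(t\delta)\sim Q(t)$). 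The piece $I_3(t)\le q(t(1-\delta))\int_0^{t\delta}r(u)\,du=O\bigl(q(t)R(t)\bigr)=O\bigl(t\,q(t)\,r(t)\bigr)$ since $R(t)=O(t\,r(t))$ is false in general — rather use $\int_0^{t\delta}r(u)\,du\sim\frac{\delta^{1-\theta}}{1-\theta}t r(t)$, so $I_3(t)=O\bigl(t\,q(t)\,r(t)\bigr)$, and now invoke $t q(t)=o(Q(t))$ (true for $\alpha/\gamma\ge 1$) to conclude $I_3(t)=o\bigl(r(t)Q(t)\bigr)$; same bound kills $I_2(t)$. Hence $I(t)\sim r(t)Q(t)$, and $P_t\sim I(t)$ since $r(t)Q(t)\to 0$ as $\theta>0$.

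The main obstacle — really the only subtlety — is bookkeeping the Karamata asymptotics for the truncated integrals $\int_0^{t\delta}q(u)\,du$ and $\int_0^{t\delta}r(u)\,du$ in the two sub-cases (index exactly $1$ versus strictly less than $1$, i.e.\ $\alpha/\gamma=1$ vs.\ $<1$, and $\theta=1$ vs.\ $<1$), and making sure the "negligible" pieces are dominated using the correct one of the two dual facts $t r(t)=o(R(t))$ (needs $\theta\ge1$) and $t q(t)=o(Q(t))$ (needs $\alpha/\gamma\ge1$) — each regime uses exactly the one that is available. Everything else is a verbatim repetition of the $\delta$-splitting argument in Theorem~\ref{thm4-1}.
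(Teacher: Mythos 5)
Your argument is correct and is essentially the paper's own proof: the paper merely uses a two-piece split at $t\delta$ (showing the piece near the origin is $o$ of the dominant one via $tr(t)=o(R(t))$, resp.\ $tq(t)=o(Q(t))$), whereas you reuse the three-piece split of Theorem~\ref{thm4-1} with $\delta\downarrow 0$ instead of $\delta\uparrow 1$ --- a cosmetic difference. One small correction to your case~(i): for $\theta>1$ the function $R(t)$ is not regularly varying of index $1-\theta$ (it converges to a finite limit, hence is slowly varying), so the parenthetical claim $R(t\delta)\sim\delta^{1-\theta}R(t)$ is wrong; the fact you actually need and use, namely $R(t\delta)\sim R(t)$ for every $\theta\ge 1$ because $R$ is slowly varying, is nevertheless true, so the proof stands.
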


\begin{proof} (i) Under the conditions of this case $R(t)=\int_{0}^{t}r(u)du%
\uparrow R\leq \infty $, $R(t)$ is a s.v.f. at infinity and $%
tr(t)=o(R(t)),t\rightarrow \infty $. Let $\delta \in (0,1)$ be
fixed.

Consider $\displaystyle
I(t)=\int_{0}^{t}r(t-u)q(u)du=\int_{0}^{t\delta }+\int_{t\delta
}^{t}=I_{1}(t)+I_{2}(t). $ Then
\begin{eqnarray*}
&&I_{2}(t)=\int_{t\delta }^{t}r(t-u)q(u)du\leq q(t\delta
)\int_{0}^{t(1-\delta )}r(u)du\sim q(t)\delta ^{-\alpha /\gamma }R(t), \\
&&I_{2}(t)\geq q(t)R(t(1-\delta ))\sim q(t)R(t),t\rightarrow \infty .\\
&&0\leq I_{1}(t)=\int_{0}^{t\delta }r(t-u)q(u)du\leq r(t(1-\delta))\int_{0}^{t\delta }q(u)du \\
&\sim &r(t(1-\delta ))\frac{t\delta q(t\delta )}{1-\alpha /\gamma
}\sim \frac{tr(t)q(t)}{1-\alpha /\gamma }(1-\delta )^{-\theta
}\delta ^{1-\alpha/\gamma }.
\end{eqnarray*}
As we mentioned above $tr(t)=o(R(t)),t\rightarrow \infty $.
Therefore $I_{1}(t)=o(I_{2}(t)))$ and
\begin{eqnarray*}
1\leq \liminf_{t\rightarrow \infty }I(t)/(q(t)R(t)) \leq
\limsup_{t\rightarrow \infty }I(t)/(q(t)R(t)) \leq \delta
^{-\alpha/\gamma}.
\end{eqnarray*}
Since $\delta \in (0,1)$ was arbitrary then we get that $I(t)\sim
q(t)R(t)\rightarrow 0,t\rightarrow \infty $. By (\ref{yanev1}) and
$1-e^{-x}\sim x,\ \ x\rightarrow 0,$ we complete the proof of case
(i). The proof of case (ii) is similar, one has to change only the
role of $r(t)$ and $q(t)$.
\end{proof}

\begin{com}
\label{com42} (i) If $\theta \geq 1$ then $R(t)$ is a s.v.f. and
$P_{t}\sim R(t)L_{Q}(t)t^{-\alpha /\gamma }.$

(ii)If $\alpha /\gamma \geq 1$ then $Q(t)$ is a s.v.f. and
$P_{t}\sim  Q(t)L_{R}(t)t^{-\theta }.$
\end{com}

%%%%%%%%%%%%%%%%%%%%%%%%%%% THEOREM 4.3 %%%%%%%%%%%%%%%%%%%%%%%%%%%%%%%%%%%%%%%%%%%%%%%%%%%%%%%%%%%%%%%%%%%

\begin{theorem}
\label{thm4-3} Assume conditions(\ref{rt-0}), (\ref{infinite-var}),
and (\ref{im-fin}) hold. Let additionally $0<\theta <1$ and
$0<\alpha /\gamma <1.$

(i) If $\theta +\alpha /\gamma >1,$ or $\theta +\alpha /\gamma =1$
but $L_{R}(t)L_{Q}(t)\rightarrow 0$ then
$$\mathbf{P}\{Y(t)>0\}\sim t.r(t).q(t).\mathbb{B}(1-\alpha /\gamma ,1-\theta),$$
 where $\mathbb{B}(.,.)$ is Euler's beta function.

(ii) If $\theta +\alpha /\gamma =1$ but $L_{R}(t)L_{Q}(t)\rightarrow
K\in (0,\infty )$ then
$$\mathbf{P}\{Y(t)>0\}\rightarrow 1-e^{-K\pi /\sin \pi \theta },\ t \rightarrow \infty .$$

(iii) If $\theta +\alpha /\gamma <1,$ or $\theta +\alpha /\gamma
=1$\ but $L_{R}(t)L_{Q}(t)\rightarrow \infty$ then
$$\mathbf{P}\{Y(t)>0\}\rightarrow 1,\ t \rightarrow \infty .$$
\end{theorem}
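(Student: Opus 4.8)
The plan is to analyze the integral $I(t)=\int_0^t r(t-u)q(u)\,du$ where $r(t)\sim t^{-\theta}L_R(t)$ and $q(t)\sim t^{-\alpha/\gamma}L_Q(t)$, both with exponents in $(0,1)$. This is a convolution of two regularly varying functions, so the natural tool is a Karamata-type / Abelian theorem for convolutions (see e.g. \cite{bgt}, Sections 1.5--1.6). The key observation is that when $\theta+\alpha/\gamma>1$ the convolution integral $\int_0^\infty u^{-\alpha/\gamma}(1-u)^{-\theta}\,du$ (rescaled) converges, giving the beta-function behavior; when $\theta+\alpha/\gamma<1$ the integral $I(t)$ diverges, so $I(t)\to\infty$ and hence $P_t=1-e^{-I(t)}\to1$; and the borderline case $\theta+\alpha/\gamma=1$ is governed by the behavior of the product $L_R(t)L_Q(t)$.

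For part (i), I would substitute $u=tv$ to write $I(t)=t\int_0^1 r(t(1-v))q(tv)\,dv$. Splitting $[0,1]$ into $[0,\delta]\cup[\delta,1-\delta]\cup[1-\delta,1]$ and using the uniform convergence of slowly varying functions on compact sets away from the endpoints, the middle piece contributes $t\,r(t)q(t)\int_\delta^{1-\delta}(1-v)^{-\theta}v^{-\alpha/\gamma}\,dv(1+o(1))$. The two end pieces need to be controlled: near $v=0$ the factor $q(tv)$ is large but integrable since $\alpha/\gamma<1$, and one bounds $r(t(1-v))\le r(t(1-\delta))\sim r(t)(1-\delta)^{-\theta}$ to get a contribution $O(t\,r(t)q(t)\delta^{1-\alpha/\gamma})$, uniformly small as $\delta\to0$; symmetrically near $v=1$ using $\theta<1$. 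Letting $\delta\to0$ recovers the full beta integral $\mathbb{B}(1-\alpha/\gamma,1-\theta)$. The condition $\theta+\alpha/\gamma>1$ (or the equality case with $L_RL_Q\to0$) is exactly what makes $t\,r(t)q(t)\to0$ so that $1-e^{-I(t)}\sim I(t)$; in the equality case one checks $t\,r(t)q(t)\sim t^{-1}L_R(t)L_Q(t)\cdot t = L_R(t)L_Q(t)\to0$ directly.

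For parts (ii) and (iii) the decisive quantity is $I(t)$ itself rather than its rate of decay. When $\theta+\alpha/\gamma<1$, the tail estimate $\int_{t/2}^t r(t-u)q(u)\,du\ge q(t)\int_{t/2}^t r(t-u)\,du = q(t)R(t/2)$ combined with the symmetric lower bound shows $I(t)\to\infty$, hence $P_t\to1$; similarly in the equality case with $L_RL_Q\to\infty$ one shows $I(t)=t\,r(t)q(t)\mathbb{B}(1-\alpha/\gamma,1-\theta)(1+o(1))=L_R(t)L_Q(t)\mathbb{B}(\cdots)(1+o(1))\to\infty$. In the critical case (ii), $\theta+\alpha/\gamma=1$ with $L_R(t)L_Q(t)\to K\in(0,\infty)$ yields $I(t)\to K\,\mathbb{B}(1-\alpha/\gamma,1-\theta)=K\,\mathbb{B}(\theta,1-\theta)=K\pi/\sin\pi\theta$ by Euler's reflection formula (using $\alpha/\gamma=1-\theta$), so $P_t\to1-e^{-K\pi/\sin\pi\theta}$.

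The main obstacle is making the endpoint estimates in part (i) genuinely uniform: one must verify that the slowly varying parts $L_R,L_Q$ do not destroy the bounds near $v=0$ and $v=1$, which requires Potter's bounds (\cite{bgt}, Theorem 1.5.6) to control $L_R(t(1-v))/L_R(t)$ and $L_Q(tv)/L_Q(t)$ uniformly for $v$ in a neighborhood of the endpoint, and then an interchange-of-limits argument (first $t\to\infty$, then $\delta\to0$). Once that uniformity is in place, everything else is a careful bookkeeping of which of the three regimes $t\,r(t)q(t)$, $L_R(t)L_Q(t)$ falls into, together with the elementary facts $1-e^{-x}\sim x$ as $x\to0$ and $1-e^{-x}\to1$ as $x\to\infty$.
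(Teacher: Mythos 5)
Your proposal is correct and follows essentially the same route as the paper: the substitution $u=tv$, the three-piece split $[0,\delta]\cup[\delta,1-\delta]\cup[1-\delta,1]$, uniform convergence of regularly varying functions on the middle interval, Karamata-type endpoint bounds of order $\delta^{1-\alpha/\gamma}$ and $\delta^{1-\theta}$ that vanish as $\delta\to0$, and then the trichotomy $I(t)\to0$, $I(t)\to K\pi/\sin\pi\theta$, $I(t)\to\infty$ combined with $1-e^{-I(t)}$. The only cosmetic difference is that you invoke Potter's bounds where the paper gets by with monotonicity of $q$ plus Karamata's theorem for the endpoint integrals, and your direct lower bound $q(t)R(t/2)\to\infty$ in case (iii) is a slight shortcut the paper does not need.
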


\begin{proof} Let $\delta \in (0,1/2)$ be fixed. Consider
\begin{eqnarray*}
\displaystyle I(t)=\int_{0}^{t\delta }+\int_{t\delta }^{t(1-\delta
)}+\int_{t(1-\delta )}^{t}=I_{1}(t)+I_{2}(t)+I_{3}(t).
\end{eqnarray*}
Changing variables $u=vt$ in $I_2(t)$ we obtain $\displaystyle
I_{2}(t)=t\int_{\delta }^{(1-\delta )}r(t(1-v))q(tv)dv.$ By the
uniform convergence of r.v.f. on compact sets we have for $v\in
[\delta ,1-\delta ]$,
\begin{eqnarray*}
&& \displaystyle r(t(1-v))\sim r(t)(1-v)^{-\theta },\ \ \ q(tv)\sim
q(t)v^{-\alpha /\gamma },t\rightarrow \infty.
\end{eqnarray*}
Therefore, as $t \to \infty$,
\begin{eqnarray*}\label{thm43-i1}
I_{2}(t) = tr(t)q(t)\int_{\delta }^{(1-\delta )}\frac{r(t(1-v))q(tv)
dv}{r(t)q(t)} \sim  tr(t)q(t)\int_{\delta }^{(1-\delta
)}(1-v)^{-\theta }v^{-\alpha/\gamma }dv.
\end{eqnarray*}
Further one gets
\begin{eqnarray}
&&0\leq I_{1}(t)=\int_{0}^{t\delta }r(t-u)q(u)du\leq r(t(1-\delta ))\int_{0}^{t\delta }q(u)du  \notag \\
&\sim &r(t)(1-\delta )^{-\theta }\frac{t\delta q(t\delta )}{1-\alpha
/\gamma }
\sim \frac{tr(t)q(t)}{1-\alpha /\gamma }(1-\delta )^{-\theta }\delta^{1-\alpha /\gamma },  \label{thm33I1}\\
&& 0 \leq I_{3}(t)=\int_{t(1-\delta )}^{t}r(t-u)q(u)du\leq q(t(1-\delta ))\int_{0}^{t\delta }r(u)du  \notag \\
&\sim & q(t)(1-\delta )^{-\alpha /\gamma }\frac{t\delta r(t\delta
)}{1-\theta } \sim \frac{tr(t)q(t)}{1-\theta }(1-\delta )^{-\alpha
/\gamma }\delta^{1-\theta },t\rightarrow \infty .  \label{thm33I2}
\end{eqnarray}
Using the estimates for $I_{1}(t),I_{2}(t),$ and $I_{3}(t)$ we
obtain that
\begin{eqnarray*}
&&\int_{\delta }^{(1-\delta )}(1-v)^{-\theta }v^{-\alpha /\gamma }dv \\
&\leq &\liminf_{t\rightarrow \infty
}\frac{I_{1}(t)+I_{2}(t)+I_{3}(t)}{tr(t)q(t)}
\leq \limsup_{t\rightarrow \infty }\frac{I_{1}(t)+I_{2}(t)+I_{3}(t)}{tr(t)q(t)} \\
&\leq &\frac{(1-\delta )^{-\theta }\delta ^{1-\alpha /\gamma
}}{1-\alpha/\gamma } +\int_{\delta }^{(1-\delta )}(1-v)^{-\theta
}v^{-\alpha /\gamma }dv +\frac{(1-\delta )^{-\theta }\delta
^{1-\alpha /\gamma }}{1-\theta }.
\end{eqnarray*}
These inequalities and the fact that $\delta \in (0,1/2)$ was
arbitrary yield
\begin{eqnarray*}
I(t) &\sim & tr(t)q(t)\mathbb{B}(1-\alpha /\gamma ,1-\theta ) \\
&\sim &t^{1-\theta -\alpha /\gamma
}L_{R}(t)L_{Q}(t)\mathbb{B}(1-\alpha /\gamma ,1-\theta ),\ \
t\rightarrow \infty .
\end{eqnarray*}

(i) In this case $I(t)\rightarrow 0$ which together (\ref{yanev1})
and $1-e^{-x}=x(1+o(1)),\ \ x\rightarrow 0,$ completes the proof.

(ii) Now $I(t)\rightarrow K\mathbb{B}(\theta ,1-\theta )=K\pi /\sin
\pi \theta $ which proves this case.

(iii) Since $I(t)\rightarrow \infty $ then by (\ref{yanev1}) the
proof of this case is completed.
\end{proof}

\begin{com}
\label{com43} In case (i) if $\theta +\alpha /\gamma >1$ then
\begin{eqnarray*}
&& P_{t}\sim L_{R}(t)L_{Q}(t)\mathbb{B}(1-\alpha /\gamma ,1-\theta
)t^{-(\theta +\alpha /\gamma -1)}\rightarrow 0,t\rightarrow \infty .
\end{eqnarray*}
Otherwise if $\theta +\alpha /\gamma =1$ then $P_{t}\sim L_{R}(t)L_{Q}(t)\mathbb{B%
}(1-\alpha /\gamma ,1-\theta )\rightarrow 0,t\rightarrow \infty ,$
i.e. the probability of non-visiting zero develops like a s.v.f.
\end{com}

%%%%%%%%%%%%%%%%%%%%%%%%%%%%%%%%%%%%%%%%%%%%%%%%%%%%%%%%%%%%%%%%%%%%%%%%%%%%%%%%%%%%%%%%%%%%%%%%%%%%%%%%%%%%%%%
%%%%% LIMIT THEOREMS%%%%%%%%%%%%%%%%%%%%%%%%%%%%%%%%%%%%%%%%%%%%%%%%%%%%%%%%%%%%%%%%%%%%%%%%%%%%%%%%%%%%%%%%%%%
%%%%%%%%%%%%%%%%%%%%%%%%%%%%%%%%%%%%%%%%%%%%%%%%%%%%%%%%%%%%%%%%%%%%%%%%%%%%%%%%%%%%%%%%%%%%%%%%%%%%%%%%%%%%%%%

\section{Limit theorems}
\label{sec5} Note that the conditional p.g.f. of $Y(t)|Y(t)>0$ has
the form
\begin{eqnarray}
&& \label{eq19}
\mathbf{E}\left[s^{Y(t)}|Y(t)>0\right]=1-(1-\Phi(t;s))/(1-\Phi(t;0)).
\end{eqnarray} and from (\ref{yanev}) we have
\begin{eqnarray}
&& \label{1minusfits} 1-\Phi (t;s)=1-\exp(-I(t;s)), \\
&&  \label{Its} I(t;s):=\int_{0}^{t}r(t-u)q(u;s)du,
\end{eqnarray}
where $q(t;s)$ is defined in (\ref{qts}).

%%%%%%%%%%%%%%%%%%%%%%%%%%% THEOREM 5.1 %%%%%%%%%%%%%%%%%%%%%%%%%%%%%%%%%%%%%%%%%%%%%%%%%%%%%%%%%%%%%%%%%%%

\begin{theorem}
\label{thm5.1} Assume the conditions (\ref{rt-0}),
(\ref{infinite-var}), and (\ref{im-fin}) hold. Assume also that in
(\ref{rt-0}) $\theta \ge 1$ such
that $R=\int_0^\infty r(t)dt<\infty$ and $\alpha/\gamma \ge 1$ such that $%
Q=\int_0^\infty q(t)dt<\infty.$

\bigskip
(i) If $q(t)=o(r(t))$ then $ \displaystyle
\mathbf{E}[s^{Y(t)}|Y(t)>0]\rightarrow 1-\Delta(s)/Q, \ \
t\rightarrow \infty .$

\bigskip
(ii) If $r(t)=o(q(t))$ then $\displaystyle \mathbf{P}\left\{
Y(t)/W(\mu t)\leq x|Y(t)>0\right\} \rightarrow D(\alpha ,\gamma
;x),$ $x\geq 0,$ where
\begin{eqnarray*}
\int_{0}^{\infty }e^{-\lambda x}\mathrm{d}D(\alpha ,\gamma
;x)=\hat{D}(\alpha ,\gamma ;\lambda ) = 1-\frac{\lambda ^{\alpha
}}{(1+\lambda ^{\gamma })^{\alpha /\gamma }},\ \ \lambda >0.
\end{eqnarray*}

\bigskip
(iii) If $r(t)/q(t)\rightarrow d\in (0,\infty ),t\rightarrow \infty
,$ then as $t \to \infty$,
\begin{eqnarray}
&& \label{thm4.1c1} \mathbf{E}[s^{Y(t)}|Y(t)>0] \to \frac{dQ}{dQ+R}\left( 1-\Delta (s)/Q\right),\\
&& \mathbf{P}\left\{ Y(t)/W(\mu t)\leq x|Y(t)>0\right\} \to
\frac{dQ+RD(\alpha ,\gamma ;x)}{dQ+R}, \ x \geq 0. \label{thm4.1c2}
\end{eqnarray}
\end{theorem}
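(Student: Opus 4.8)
The plan is to analyze the integral $I(t;s)=\int_0^t r(t-u)q(u;s)du$ and its companion $I(t)=I(t;0)$ under the three regimes, splitting the interval as in the proofs of Theorems~\ref{thm4-1}--\ref{thm4-3}. Since here $\theta\ge 1$ and $\alpha/\gamma\ge 1$ we are in the situation of Theorem~\ref{thm4-1}: both $R(t)=\int_0^t r(u)du$ and $Q(t)=\int_0^t q(u)du$ are slowly varying, $R(t)\uparrow R<\infty$, $Q(t)\uparrow Q<\infty$, and $tr(t)=o(R(t))$, $tq(t)=o(Q(t))$. The first task is to establish the asymptotics $I(t;s)\sim R(t)q(t)+Q(\infty;s)r(t)$ as $t\to\infty$, where I write $Q(t;s):=\int_0^t q(u;s)du\uparrow \Delta(s)$ (finite by Proposition~\ref{proposition}, since $q(u;s)\le q(u)$). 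Indeed, splitting $I(t;s)=\int_0^{t\delta}+\int_{t\delta}^{t(1-\delta)}+\int_{t(1-\delta)}^{t}$, the first piece is squeezed between $r(t)\int_0^{t\delta}q(u;s)du$ and $r(t(1-\delta))\int_0^{t\delta}q(u;s)du$, hence $\sim r(t)\Delta(s)$ up to a factor $(1-\delta)^{-\theta}$; the third piece, using Lemma~\ref{lem1} ($q(u;s)/q(u)\to1$) together with $q(u;s)\le q(u)$, is $\sim q(t)R(t)$ up to a factor $(1-\delta)^{-\alpha/\gamma}$; and the middle piece is $O(tr(t)q(t))=o(r(t))=o(q(t)R(t))$. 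Letting $\delta\downarrow 0$ gives $I(t;s)\sim R(t)q(t)+\Delta(s)r(t)$, and in particular (with $s=0$, $\Delta(0)=Q$) $I(t)\sim R(t)q(t)+Qr(t)$, recovering Theorem~\ref{thm4-1}.

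Next I would treat the three cases. In case~(i), $q(t)=o(r(t))$ forces $R(t)q(t)=o(r(t))$, so $I(t;s)\sim \Delta(s)r(t)$ and $I(t)\sim Qr(t)$; both tend to $0$. Using $1-\exp(-x)\sim x$ and \eqref{eq19} gives
\begin{eqnarray*}
\mathbf{E}[s^{Y(t)}|Y(t)>0]=1-\frac{1-\Phi(t;s)}{1-\Phi(t;0)}=1-\frac{I(t;s)(1+o(1))}{I(t)(1+o(1))}\to 1-\frac{\Delta(s)}{Q}.
\end{eqnarray*}
In case~(ii), $r(t)=o(q(t))$ forces $\Delta(s)r(t)=o(R(t)q(t))$, so $I(t;s)\sim R(t)q(t)$ and $I(t)\sim R(t)q(t)$, again both $\to 0$. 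The ratio $I(t;s)/I(t)\to 1$, which only yields the trivial statement $\mathbf{E}[s^{Y(t)}|Y(t)>0]\to 1$ for fixed $s$; the nondegenerate limit appears under the scaling $Y(t)/W(\mu t)$. Here one substitutes $s=s(t):=\exp(-\lambda/W(\mu t))$ so that $\mathbf{E}[s(t)^{Y(t)}|Y(t)>0]=\mathbf{E}[e^{-\lambda Y(t)/W(\mu t)}|Y(t)>0]$, and repeats the interval-splitting for $I(t;s(t))$ but now using Lemma~\ref{lem2}: $q(tv;s(t))/q(t)\to(v+\lambda^{-\gamma})^{-\alpha/\gamma}$. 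This gives
\begin{eqnarray*}
I(t;s(t))&=&tr(t)q(t)\int_0^1\frac{r(t(1-v))}{r(t)}\cdot\frac{q(tv;s(t))}{q(t)}dv\\
&\sim& R(t)q(t)\Big/ \text{something} \cdots
\end{eqnarray*}
— more carefully, the dominant contribution comes from $v$ near $1$ (where $r(t(1-v))$ is large), as in case~(ii)'s analysis: $I(t;s(t))\sim q(t)\int_0^{\infty}r(w)dw\cdot(1+\lambda^{-\gamma})^{-\alpha/\gamma}=R q(t)(1+\lambda^{-\gamma})^{-\alpha/\gamma}$, while $I(t)\sim Rq(t)$. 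Hence the conditional Laplace transform tends to $1-(1-\exp(-I(t;s(t))))/(1-\exp(-I(t)))\to 1-(1+\lambda^{-\gamma})^{-\alpha/\gamma}$. Writing $(1+\lambda^{-\gamma})^{-\alpha/\gamma}=\lambda^{\alpha}/(1+\lambda^{\gamma})^{\alpha/\gamma}$ identifies this with $\hat D(\alpha,\gamma;\lambda)$, and the continuity theorem for Laplace transforms gives the stated convergence in distribution.

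Finally, in case~(iii) with $r(t)/q(t)\to d\in(0,\infty)$, we have $R(t)q(t)\sim R(t)r(t)/d\to 0$ and $r(t)\to 0$, and the full asymptotics survive: $I(t;s)\sim R(t)q(t)+\Delta(s)r(t)\sim q(t)(R+d\Delta(s))$ and $I(t)\sim q(t)(R+dQ)$, both $\to0$. For fixed $s$,
\begin{eqnarray*}
\mathbf{E}[s^{Y(t)}|Y(t)>0]\to 1-\frac{R+d\Delta(s)}{R+dQ}=\frac{d(Q-\Delta(s))}{R+dQ}=\frac{dQ}{dQ+R}\Big(1-\frac{\Delta(s)}{Q}\Big),
\end{eqnarray*}
which is \eqref{thm4.1c1}. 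For \eqref{thm4.1c2} one again puts $s=s(t)=\exp(-\lambda/W(\mu t))$ and computes $I(t;s(t))$: the $\int_0^{t\delta}$ piece contributes $\sim \Delta(s(t))r(t)$; but since $1-s(t)\to0$, $\Delta(s(t))\to\Delta(1)=0$ — so that piece is negligible, and instead one must track it more delicately. Actually the correct split here keeps the $R(t)q(t)$-type term coming from $u$ near $t$ (giving $\sim q(t)R(1+\lambda^{-\gamma})^{-\alpha/\gamma}$ by Lemma~\ref{lem2}) and a term from $u$ bounded, where $q(u;s(t))\to q(u;1)=0$ pointwise but $\int_0^{t\delta}r(t-u)q(u;s(t))du\sim r(t)\int_0^{\infty}q(u;s(t))du$ and $\int_0^\infty q(u;s(t))du=\Delta(s(t))\to0$ — wait, this needs care; the resolution is that one writes $I(t;s(t))=\int_0^t r(t-u)q(u;s(t))du$ and notes $q(u;s(t))=1/\Psi(W(\mu u+V((1-s(t))^{-1})))$ with $V((1-s(t))^{-1})\sim\lambda^{-\gamma}\mu t$, so that for $u=O(t)$, $q(u;s(t))\sim q(t)(u/t+\lambda^{-\gamma})^{-\alpha/\gamma}$ uniformly, whence $I(t;s(t))\sim tr(t)q(t)\int_0^1(1-v)^{-\theta}(v+\lambda^{-\gamma})^{-\alpha/\gamma}dv$; but $\theta\ge1$ makes this integral diverge at $v=1$, and the divergence is tamed exactly because $R(t)=\int^t$ is slowly varying — yielding $I(t;s(t))\sim q(t)\big(R\,(1+\lambda^{-\gamma})^{-\alpha/\gamma}+d\int_0^1 q^{*}\big)$ for an appropriate constant. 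The clean bookkeeping is: $I(t;s(t))\sim q(t)R\,\hat D\text{-part}+r(t)\,\widetilde\Delta$ where $\widetilde\Delta=\lim \Delta(s(t))\cdot(\text{correction})$. The cleanest route, which I would actually adopt, is to decompose $1-\Phi(t;s(t))$ itself: by \eqref{yanev} and the branching/immigration structure, $I(t;s(t))=\int_0^t r(t-u)q(u;s(t))du$ splits into the contribution of immigrations in a fixed window $[0,T]$ (which, as $t\to\infty$ then $T\to\infty$, gives $r(t)\Delta(s(t))\to 0$ — hence must instead be handled jointly) and the contribution of late immigrations $u\in[t-T,t]$ near the top. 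The main obstacle is precisely this case~(iii) scaled computation: one has two different scalings competing (the ``stationary'' discrete-time part governed by $\Delta$, and the $W(\mu t)$-scaled continuous part governed by $D(\alpha,\gamma;\cdot)$), and the limit \eqref{thm4.1c2} is a genuine mixture $\frac{dQ+R D(\alpha,\gamma;x)}{dQ+R}$ with an atom of mass $\frac{dQ}{dQ+R}$ at $x=0$. Establishing that the ``old'' immigrations (contributing the atom at $0$ after scaling by $W(\mu t)$, since each $Y$-subpopulation stays bounded) and the ``recent'' immigrations (contributing the $D(\alpha,\gamma;\cdot)$ part) asymptotically decouple, with the stated weights $dQ$ and $R$, is the crux; everything else is the interval-splitting and regular-variation bookkeeping already rehearsed in Section~4 together with Lemmas~\ref{lem1} and~\ref{lem2}.
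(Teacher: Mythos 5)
Your treatment of case (i), case (ii), and the discrete limit \eqref{thm4.1c1} in case (iii) is essentially the paper's argument (the paper obtains the key asymptotics $I(t;s)\sim \Delta(s)r(t)+Rq(t;s)$ by citing a result on regularly varying densities rather than redoing the three-way splitting, but that is a cosmetic difference). The genuine problem is the second limit \eqref{thm4.1c2} in case (iii), which you explicitly leave unproved, calling the ``decoupling of old and recent immigrations with weights $dQ$ and $R$'' the crux. There is no such crux, and the detour you sketch there is also technically wrong: the representation $I(t;s(t))\sim tr(t)q(t)\int_0^1(1-v)^{-\theta}(v+\lambda^{-\gamma})^{-\alpha/\gamma}dv$ cannot be used, since for $\theta\geq 1$ the integral diverges at $v=1$ (the same objection applies to the intermediate display you wrote in case (ii), though there you self-correct to the right answer).

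What actually closes case (iii) is a verbatim repetition of your case (ii) computation. With $s(t)=\exp(-\lambda/W(\mu t))$ split $I(t;s(t))=\int_0^{t\delta}+\int_{t\delta}^{t}$; by monotonicity of $q(\cdot;s)$ and Lemma \ref{lem2} with $c=1$ and $c=1-\delta$, the first piece is sandwiched between $q(t;s(t))\int_0^{t\delta}r(u)du$ and $q(t(1-\delta);s(t))\int_0^{t\delta}r(u)du$, hence is $\sim Rq(t)(1+\lambda^{-\gamma})^{-\alpha/\gamma}$ after $\delta\downarrow 0$; the second piece is at most $r(t\delta)\cdot O(tq(t))=o(q(t))$ because $tr(t)\to 0$ (here one bounds $q(u;s(t))\leq q(0;s(t))\sim\lambda^{\alpha}q(t)$, not by $\Delta(s(t))r(t)$, which is why your worry about $\Delta(s(t))\to 0$ is beside the point). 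Thus $1-\Phi(t;s(t))\sim Rq(t)(1+\lambda^{-\gamma})^{-\alpha/\gamma}$, exactly as in case (ii); the only difference from (ii) is the denominator, $1-\Phi(t;0)\sim (R+dQ)q(t)$, so by \eqref{eq19} the conditional Laplace transform converges to $1-\tfrac{R}{R+dQ}(1+\lambda^{-\gamma})^{-\alpha/\gamma}=\tfrac{dQ+R\hat D(\alpha,\gamma;\lambda)}{dQ+R}$. The mixture weights and the atom at zero therefore fall out automatically from the ratio of the two asymptotics already established; no separate decoupling argument is needed, and without this computation your proof of \eqref{thm4.1c2} is incomplete.
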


\begin{remark}
From Proposition \ref{proposition} one obtains for the limiting
p.g.f.
\begin{eqnarray*}
&& \varphi(s)=1-\frac{\Delta(s)}{Q}=\left(\int_{0}^{s}\frac{1-g(u)}{f(u)-u}%
du\right)/\left(\int_{0}^{1}\frac{1-g(u)}{f(u)-u}du\right).
\end{eqnarray*}
\end{remark}

\begin{proof} Notice first that under the conditions of the theorem Eq.(\ref{dt-asimp0}) gets the form
\begin{eqnarray}
&& \mathbf{P}\{Y(t)>0\} \sim R.q(t)+ Q.r(t),\ \ \ t \to
\infty.\label{dt-asimp0-a}
\end{eqnarray}
By  Lemma \ref{lem1} we conclude that $\displaystyle Q=\int_0^\infty
q(t)dt<\infty$ yields $\displaystyle \Delta(s)=\int_0^\infty
q(t;s)dt<\infty$ and by the dominated convergence theorem $\Delta(s)
\to Q$ as $s \downarrow 0 $. In this way for every fixed $s\in
[0,1),$ $\displaystyle\frac{q(t,s)}{\Delta (s)}$ is a density on
$[0,\infty )$. Under the conditions of the theorem one has also that
$r(t)/R$ is a density on $[0,\infty )$. Therefore by (Theorem 1,
\cite{bgo}) for any fixed $s\in [0,1)$,
\begin{eqnarray}
&&\label{Its0} I(t;s) \sim \Delta(s).r(t)+q(t;s).R  \to 0, \ \ \ t
\to \infty.
\end{eqnarray}
By the relation $1-e^{-x}\sim x, \ \ x \to 0$ we get that for any
fixed $s \in [0,1)$
\begin{eqnarray}\label{Its1}
&& 1-\Phi(t;s) \sim I(t;s), \ \  t \to \infty.
\end{eqnarray}

(i) Using equations (\ref{dt-asimp0-a}), (\ref{Its0}), and
(\ref{Its1}) we get
\begin{eqnarray*}
\frac{1-\Phi (t;s)}{1-\Phi (t;0)}\sim \frac{\Delta
(s)r(t)+q(t;s).R}{Q.r(t)+q(t).R} =\frac{\Delta
(s)+q(t;s)\frac{R}{r(t)}}{Q+q(t)\frac{R}{r(t)}},t\rightarrow \infty
.
\end{eqnarray*}
In this case we have $q(t)R/r(t)\rightarrow 0,\ q(t;s)R/r(t)\sim
q(t)R/r(t)\rightarrow 0.$ Therefore,
\begin{eqnarray*}
[1-\Phi (t;s)]/[1-\Phi (t;0)]\rightarrow \Delta (s)/Q, \
t\rightarrow \infty,
\end{eqnarray*}
which together with (\ref{eq19}) completes the proof of this case.

(ii) In this case $r(t)/q(t;s)\rightarrow 0,\ t\rightarrow \infty,$
for every fixed $s\in [0,1).$ So,
\begin{eqnarray*}
\frac{1-\Phi (t;s)}{1-\Phi (t;0)}\sim
\frac{q(t;s)}{q(t)}\frac{\Delta (s)
\frac{r(t)}{q(t;s)}+R}{Q\frac{r(t)}{q(t)}+R}\rightarrow 1, \
t\rightarrow \infty.
\end{eqnarray*}
In other words, almost all non-degenerate sample paths go to
infinity. So we need an appropriate normalization in order to get a
proper limit distribution. Let now $s(t)=\exp (-\lambda /W(\mu t)).$
For $\delta \in (0,1)$ fixed one has
\begin{eqnarray*}
I(t;s(t))=\int_{0}^{t\delta
}+\int_{t\delta}^{t}=I_{1}(t;s(t))+I_{2}(t;s(t)).
\end{eqnarray*}
Having in mind that $q(t;s)$ is non-increasing in $t\geq 0$ we have
\begin{eqnarray*}
&&q(t;s(t))\int_{0}^{t\delta}r(u)du \leq I_{1}(t;s(t))=\int_{0}^{t\delta }r(u)q(t-u;s(t))du \\
&\leq &q(t(1-\delta );s(t))\int_{0}^{t\delta}r(u)du.
\end{eqnarray*}%
Using that $\displaystyle \lim_{t \to \infty}\int_0^{t\delta}r(u)du
= R<\infty$ and  applying Lemma \ref{lem2} with $c=1$ and
$c=1-\delta $ one obtains
\begin{eqnarray*}
&& q(t;s(t))\int_0^{t\delta} r(u)du \sim q(t)(1+\lambda^{-\gamma})^{-\alpha /\gamma }R,\\
&& q(t(1-\delta );s(t))\int_0^{t\delta} r(u)du \sim q(t)\left(
1-\delta +\lambda ^{-\gamma }\right)^{-\alpha /\gamma }R,\
t\rightarrow \infty,
\end{eqnarray*}
Hence
\begin{eqnarray*}
R(1+\lambda ^{-\gamma})^{-\frac{\alpha}{\gamma}} \le
\liminf_{t\rightarrow \infty }I_{1}(t;s(t))/q(t) \leq
 \limsup_{t\rightarrow \infty }I_{1}(t;s(t))/q(t)\leq R(1-\delta+\lambda ^{-\gamma })^{-\frac{\alpha}{\gamma} }.
\end{eqnarray*}
On the other hand for $I_{2}(t;s(t))$ we obtain
\begin{eqnarray*}
&&I_{2}(t;s(t))=\int_{t\delta }^{t}r(u)q(t-u;s(t))du\leq r(t\delta)\int_{0}^{t(1-\delta )}q(u;s(t))du \\
&\leq &r(t\delta )\int_{0}^{t(1-\delta )}q(u)du\leq
r(t\delta)Q=o(q(t)),t\rightarrow \infty .
\end{eqnarray*}
From the relations for $I_{1}(t;s(t))$ and $I_{2}(t;s(t))$ we get
\begin{eqnarray*}
\frac{R\lambda ^{\alpha }}{(1+\lambda ^{\gamma })^{\alpha /\gamma }}
\leq \liminf_{t\rightarrow \infty }\frac{I(t;s(t))}{q(t)} \leq
\limsup_{t\rightarrow \infty }\frac{I(t;s(t))}{q(t)}\leq
\frac{R\lambda^{\alpha }}{(1-\delta +\lambda ^{\gamma })^{\alpha
/\gamma }}.
\end{eqnarray*}
Hence $\displaystyle I(t;s(t))\sim q(t)\frac{R\lambda ^{\alpha
}}{(1+\lambda ^{\gamma })^{\alpha /\gamma }}\rightarrow 0,$ which
gives that
\begin{eqnarray*}
1-\Phi (t;s(t))\sim q(t)R\lambda ^{\alpha }(1+\lambda ^{\gamma
})^{-\alpha/\gamma },t\rightarrow \infty,
\end{eqnarray*}
using the asymptotic $1-e^{-x}\sim x,\ \ x\rightarrow 0.$ This
relation, (\ref{eq19}), and (\ref{dt-asimp0-a}) with $r(t)=o(q(t))$
yield
\begin{eqnarray*}
\lim_{t\rightarrow \infty }\mathbf{E}\left[ e^{-\lambda Y(t)/W(\mu
t)}|Y(t)>0\right]
 =\hat{D}_{\gamma }(\lambda )=1-\lambda ^{\alpha }(1+\lambda ^{\gamma})^{-\alpha /\gamma },\ \ \lambda >0,
\end{eqnarray*}
which completes the proof of case (ii).

(iii) We have from (\ref{dt-asimp0}) that
\begin{eqnarray}
\label{fit0casec} && 1-\Phi(t;0) \sim (R+dQ)q(t), t \to \infty,
\end{eqnarray}
From equations (\ref{dt-asimp0-a}), (\ref{Its0}), and (\ref{Its1})
it follows that
\begin{eqnarray*}
1-\Phi (t;s)\sim (R+d\Delta (s)))q(t;s),t\rightarrow \infty .
\end{eqnarray*}
This relation and (\ref{fit0casec}) yield
\begin{eqnarray*}
[1-\Phi (t;s)]/[1-\Phi (t;0)]\rightarrow [R+d\Delta (s)]/[R+dQ],
\end{eqnarray*}
which is equivalent to (\ref{thm4.1c1}).

The obtained discrete limiting distribution is not proper. It has
mass at infinity $\displaystyle \frac{dQ}{R+dQ}$. In other words
there are sample paths that grow very fast and they have to be
normalized by some factor in order to obtain a
proper limiting distribution. Let $s(t)=\exp (-\lambda /W(\mu t))$. For $%
\delta \in (0,1)$ one has
\begin{eqnarray*}
I(t;s(t))=\int_{0}^{t/\delta }+\int_{t\delta
}^{t}=I_{1}(t;s(t))+I_{2}(t;s(t))
\end{eqnarray*}%
and hence
\begin{eqnarray*}
q(t;s(t))\int_{0}^{t\delta }r(u)du\leq I_{1}(t;s(t))\leq q\left(
t(1-\delta );s(t)\right) \int_{0}^{t\delta }r(u)du.
\end{eqnarray*}%
Applying Lemma \ref{lem2} with $c=1$ and $c=1-\delta $ we obtain
\begin{eqnarray*}
\left( 1+\frac{1}{\lambda ^{\gamma }}\right) ^{-\alpha /\gamma }\leq
\liminf_{t\rightarrow \infty }\frac{I_{1}(t;s(t))}{Rq(t)}\leq
\limsup_{t\rightarrow \infty }\frac{I_{1}(t;s(t))}{Rq(t)}\leq
 \left(1-\delta +\frac{1}{\lambda^{\gamma }}\right)^{-\alpha/\gamma}.
\end{eqnarray*}%
Since $q(t)$ is non-increasing in $s\in [0,1)$ then one has that
\begin{eqnarray*}
I_{2}(t;s(t)\leq r(t\delta )\int_{0}^{t(1-\delta )}q(u;s(t))du\leq
r(t\delta )q(t)t(1-\delta )=o(q(t)),
\end{eqnarray*}%
having in mind that $tr(t)=o(1),\ t\rightarrow \infty $. Therefore
\begin{eqnarray*}
\left( 1+\frac{1}{\lambda ^{\gamma }}\right) ^{-\alpha /\gamma }\leq
\liminf_{t\rightarrow \infty }\frac{I(t;s(t))}{Rq(t)}\leq
\limsup_{t\rightarrow \infty }\frac{I(t;s(t))}{Rq(t)}\leq \left( 1-\delta +%
\frac{1}{\lambda ^{\gamma }}\right) ^{-\alpha /\gamma }.
\end{eqnarray*}
Since $\delta \in (0,1)$ was arbitrary we get $I(t;s(t)) \sim
R.q(t).\left(1+1/\lambda ^{\gamma }\right) ^{-\alpha /\gamma
}\rightarrow 0,\ t\rightarrow \infty .$ Therefore,
\begin{eqnarray*}
1-\Phi ((t;s(t))\sim R.q(t).\left( 1+1/\lambda ^{\gamma }\right)
^{-\alpha/\gamma }\rightarrow 0,\ t\rightarrow \infty,
\end{eqnarray*}
which leads to
\begin{eqnarray*}
\lim_{t\rightarrow \infty }\frac{1-\Phi (t;s(t))}{1-\Phi (t;0)}=\frac{R}{R+dQ%
}\left( 1+\frac{1}{\lambda ^{\gamma }}\right) ^{-\alpha /\gamma }.
\end{eqnarray*}%
This relation and (\ref{fit0casec}) prove (\ref{thm4.1c2}).
\end{proof}

\begin{com}
\label{com51} (i) Since $q(t)/r(t)=t^{-(\alpha /\gamma -\theta
)}L_{Q}(t)/L_{R}(t)\rightarrow 0$ then $\alpha /\gamma >\theta $ or
$\alpha /\gamma =\theta $ but $L_{Q}(t)/L_{R}(t)\rightarrow 0.$ Let
us consider a particular case when in (\ref{infinite-var})
$L(s)\equiv 1/(1+\gamma )$ and in (\ref{im-fin}) $l(s)\equiv 1.$
Then for $\alpha >\gamma $ we obtain
\begin{eqnarray*}
\int_{0}^{s}\frac{1-g(x)}{f(x)-x}dx=\frac{1+\gamma }{\alpha -\gamma }%
(1-(1-s)^{\alpha -\gamma }.
\end{eqnarray*}%
Therefore $Q=\int_{0}^{1}\frac{1-g(x)}{f(x)-x}dx=\frac{1+\gamma
}{\alpha -\gamma }$ and
\begin{eqnarray*}
\varphi
(s)=\frac{1}{Q}\int_{0}^{s}\frac{1-g(x)}{f(x)-x}dx=1-(1-s)^{\alpha
-\gamma }.
\end{eqnarray*}%
Hence the limiting r.v. belongs to the normal domain of attraction
of a stable law with parameter $\alpha -\gamma .$ Note that for
$\alpha =\gamma $ we have
$\int_{0}^{s}\frac{1-g(x)}{f(x)-x}dx=(1+\gamma )\log \frac{1}{1-s}$
and therefore $Q=\infty .$

(ii) By the Tauberian theorem (Feller \cite{feler}, Ch. XIII,
(5.20)) one has $1-D(\alpha ,\gamma ;x)\sim x^{-\alpha }/\Gamma
(1-\alpha ),x\rightarrow \infty .$ Note that $\alpha /\gamma <\theta
$ or $\alpha /\gamma =\theta $ but $L_{R}(t)/L_{Q}(t)\rightarrow 0.$

(iii) We obtained (with different normalization) two singular to
each other conditional limiting distributions. The first one is a
discrete non-proper distribution similar to the case (i) but now
with an atom at infinity with probability $R/(dQ+R).$ The second one
is similar to the case (ii) \ but it
has now an atom at zero with probability $dQ/(dQ+R).$ In other words, as $%
t\rightarrow \infty $ it follows for the sample paths that $Y(t)\sim
\zeta _{1}$ with probability $dQ/(dQ+R)$ where $\zeta _{1}$ is a
limiting r.v. in the first case and $Y(t)\sim \zeta _{2}W(\mu t)$
with probability $R/(dQ+R)$
where $\zeta _{2}$ is the limiting r.v. in the second case. Note that $%
\alpha /\gamma =\theta $ and $L_{R}(t)/L_{Q}(t)\rightarrow d.$
\end{com}

%%%%%%%%%%%%%%%%%%%%%%%%%%% THEOREM 5.2 %%%%%%%%%%%%%%%%%%%%%%%%%%%%%%%%%%%%%%%%%%%%%%%%%%%%%%%%%%%%%%%%%%%

\begin{theorem}
\label{thm5.2} Assume the conditions (\ref{rt-0}),
(\ref{infinite-var}), and (\ref{im-fin}) hold.

\bigskip
(i) If $\theta =1$ such that $R(t)\rightarrow \infty ,t\rightarrow
\infty ,$ and $\alpha /\gamma >1$ then
\begin{eqnarray*}
&& \mathbf{E}[s^{Y(t)}|Y(t)>0] \to 1-\Delta (s)/Q.
\end{eqnarray*}

\bigskip
(ii) If $\theta >1$ and $\alpha /\gamma =1$ such that
$Q(t)\rightarrow \infty, t\rightarrow \infty ,$ then
\begin{eqnarray*}
&& \mathbf{P}\left\{ Y(t)/W(\mu t)\leq x|Y(t)>0\right\} \rightarrow
D(\alpha ,\gamma ;x), x\geq 0,
\end{eqnarray*}
 where $D(\alpha ,\gamma ;x),x\geq 0$  is defined in Theorem 5.1 (ii).
\end{theorem}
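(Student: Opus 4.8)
The plan is to follow the architecture of the proof of Theorem \ref{thm5.1}, the new feature being that now exactly one of the two constants $R=\int_0^\infty r(u)\,du$ and $Q=\int_0^\infty q(u)\,du$ is infinite, so the convolution asymptotics of (Theorem~1, \cite{bgo}) used there is no longer available and must be replaced by a direct splitting of $I(t;s)$ (resp.\ $I(t;s(t))$) into a near-diagonal piece and a bulk piece. First I would fix the order of magnitude of $P_t=1-\Phi(t;0)$. In both cases $\theta\ge 1$ and $\alpha/\gamma\ge 1$, so Theorem \ref{thm4-1} applies and $P_t\sim R(t)q(t)+Q(t)r(t)$, and it remains only to decide which summand wins. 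In case (i), $\theta=1$ makes $R(t)$ slowly varying (and divergent by hypothesis) while $Q(t)\to Q<\infty$, and $R(t)q(t)/(Q(t)r(t))$ behaves like $R(t)t^{1-\alpha/\gamma}L_Q(t)/L_R(t)\to 0$ since the power $t^{1-\alpha/\gamma}$ (with $\alpha/\gamma>1$) overwhelms the slowly varying $R(t)$; hence $P_t\sim Q\,r(t)\to 0$. In case (ii), $\theta>1$ makes $R(t)\to R<\infty$ while $\alpha/\gamma=1$ makes $Q(t)$ slowly varying and divergent, and the same computation with the roles of $r$ and $q$ exchanged (now $t^{\theta-1}$ overwhelms $Q(t)$) gives $Q(t)r(t)=o(q(t))$, hence $P_t\sim R\,q(t)\to 0$. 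These are precisely the first two regimes of Comment \ref{com41}.

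For part (i) fix $s\in[0,1)$. Since $q(u;s)\le q(u;0)=q(u)$ (the map $s\mapsto q(u;s)$ is non-increasing) and $Q<\infty$, the integral $\Delta(s)=\int_0^\infty q(u;s)\,du$ converges. For $\delta\in(0,1/2)$ split $I(t;s)=\int_0^{t\delta}r(t-u)q(u;s)\,du+\int_{t\delta}^{t}r(t-u)q(u;s)\,du$. On the first integral $t-u\in[t(1-\delta),t]$, so the uniform convergence of regularly varying functions on compact sets (\cite{bgt}, Theorem~1.5.2) bounds $r(t-u)/r(t)$ between $1-\varepsilon$ and $(1+\varepsilon)(1-\delta)^{-1}$ for $t$ large; since $\int_0^{t\delta}q(u;s)\,du\to\Delta(s)$, that integral lies asymptotically between $r(t)\Delta(s)$ and $(1-\delta)^{-1}r(t)\Delta(s)$. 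Substituting $w=t-u$ in the second integral and using that $q(\cdot;s)$ is non-increasing in time, it is at most $q(t\delta;s)\int_0^{t(1-\delta)}r(w)\,dw=q(t\delta;s)R(t(1-\delta))$, which by Lemma \ref{lem1} and the regular variation of $q$ is $\sim\delta^{-\alpha/\gamma}q(t)R(t)=o(r(t))$ by the comparison above. Letting $\delta\downarrow 0$ gives $I(t;s)\sim r(t)\Delta(s)\to 0$, hence $1-\Phi(t;s)\sim r(t)\Delta(s)$ by $1-e^{-x}\sim x$. Dividing by $1-\Phi(t;0)\sim Q\,r(t)$ and invoking (\ref{eq19}) yields $\mathbf{E}[s^{Y(t)}\mid Y(t)>0]\to 1-\Delta(s)/Q$, and Proposition \ref{proposition} rewrites the limit in the closed form of the Remark following Theorem \ref{thm5.1}.

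For part (ii) the unconditional limit would be concentrated at infinity, so I normalise by $W(\mu t)$. Put $s(t)=\exp(-\lambda/W(\mu t))$ with $\lambda>0$; after the change of variable $w=t-u$, split $I(t;s(t))=\int_0^{t\delta}r(w)q(t-w;s(t))\,dw+\int_{t\delta}^{t}r(w)q(t-w;s(t))\,dw$. On the first integral $q(\cdot;s(t))$ is non-increasing in time, so it is sandwiched between $q(t;s(t))R(t\delta)$ and $q(t(1-\delta);s(t))R(t\delta)$, where $\int_0^{t\delta}r(w)\,dw=R(t\delta)\to R$; Lemma \ref{lem2} (with $c=1$ and $c=1-\delta$) gives $q(ct;s(t))\sim q(t)(c+\lambda^{-\gamma})^{-\alpha/\gamma}$, and letting $\delta\downarrow 0$ shows this piece is $\sim R\,q(t)(1+\lambda^{-\gamma})^{-\alpha/\gamma}$. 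On the second integral, $r(w)\le r(t\delta)$ and $q(v;s(t))\le q(v;0)=q(v)$ give the bound $r(t\delta)\int_0^{t(1-\delta)}q(v)\,dv=r(t\delta)Q(t(1-\delta))\sim\delta^{-\theta}r(t)Q(t)=o(q(t))$ by the comparison of case (ii). Hence $I(t;s(t))\sim R\,q(t)(1+\lambda^{-\gamma})^{-\alpha/\gamma}\to 0$, so $1-\Phi(t;s(t))\sim R\,q(t)(1+\lambda^{-\gamma})^{-\alpha/\gamma}$; dividing by $1-\Phi(t;0)\sim R\,q(t)$ and using (\ref{eq19}) together with $s(t)^{Y(t)}=e^{-\lambda Y(t)/W(\mu t)}$ gives $\mathbf{E}[e^{-\lambda Y(t)/W(\mu t)}\mid Y(t)>0]\to 1-(1+\lambda^{-\gamma})^{-\alpha/\gamma}=1-\lambda^{\alpha}(1+\lambda^{\gamma})^{-\alpha/\gamma}=\hat{D}(\alpha,\gamma;\lambda)$. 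The continuity theorem for Laplace transforms then yields the convergence to $D(\alpha,\gamma;x)$, $x\ge 0$.

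The main obstacle is the bulk-integral estimate: showing that the contribution of the middle range is $o(r(t))$ in (i) and $o(q(t))$ in (ii). This is exactly where the strict inequalities $\alpha/\gamma>1$ (in (i)) and $\theta>1$ (in (ii)) are needed, through the fact that a genuine power beats a slowly varying function, and it is also what makes the crude monotonicity bound on $q(\cdot;s)$ sufficient in place of the convolution theorem of \cite{bgo}; everything else is the bookkeeping already present in the proof of Theorem \ref{thm5.1}.
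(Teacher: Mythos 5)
Your proposal is correct and follows essentially the same route as the paper: the paper's proof of Theorem \ref{thm5.2} simply asserts $1-\Phi(t;s)\sim r(t)\Delta(s)$ in case (i) and $1-\Phi(t;s)\sim R\,q(t;s)$ in case (ii) and refers back to the machinery of Theorem \ref{thm5.1} (splitting $I(t;s(t))$ at $t\delta$, Lemma \ref{lem1}, Lemma \ref{lem2}, and $1-e^{-x}\sim x$). You merely make explicit what the paper leaves implicit — in particular the splitting argument replacing the \cite{bgo} convolution lemma when $R(t)\to\infty$ (resp. $Q(t)\to\infty$) and the $o(r(t))$, $o(q(t))$ bulk estimates coming from $\alpha/\gamma>1$, $\theta>1$ — so no substantive difference or gap.
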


\begin{proof} (i) In this case $q(t,s)=o(r(t)),t\rightarrow \infty ,$ for any fixed
$s\in [0,1)$ and $1-\Phi (t;s)\sim r(t).\Delta (s).$ Following the
same way as in the proof of Theorem \ref{thm5.1} (i) we can complete
now the proof.

(ii) In this case $\displaystyle R=\int_{0}^{\infty }r(t)dt<\infty $ and $%
r(t)=o(q(t)),t\rightarrow \infty $. Using that
$q(t;s)/q(t)\rightarrow
1,t\rightarrow \infty $ for any fixed $s\in [0,1)$ (see Lemma \ref%
{lem1}) we can obtain similarly as in the proof of Theorem
\ref{thm5.1} (ii) that $1-\Phi (t;s)\sim R.q(t;s),t\rightarrow
\infty ,\ \mathrm{\ for\ any\ fixed}\ s\in [0,1).$ Therefore
$[1-\Phi (t;s)]/[1-\Phi (t;0)]\rightarrow 1,t\rightarrow \infty ,$
i.e. almost all non-degenerate sample paths go to infinity. Working
in the same way as in the proof of Theorem \ref{thm5.1} (ii) we are
able to complete the proof.
\end{proof}

\begin{com}
\label{com52} Theorem \ref{thm5.2} can be interpreted as an
extension of the cases (i) and (ii) of Theorem \ref{thm5.1}, where
both $R$ and $Q$ are finite. Now $Q(t)\rightarrow \infty $ in (i)
and $R(t)\rightarrow \infty $ in (ii).
\end{com}

%%%%%%%%%%%%%%%%%%%%%%%%%%% THEOREM 5.3 %%%%%%%%%%%%%%%%%%%%%%%%%%%%%%%%%%%%%%%%%%%%%%%%%%%%%%%%%%%%%%%%%%%

\begin{theorem}
\label{thm5.3} Assume the conditions (\ref{rt-0}),
(\ref{infinite-var}), and
(\ref{im-fin}) hold. If $\theta =1$ and $\alpha =\gamma $ such that $%
R(t)\uparrow R\leq \infty, \ Q(t)\uparrow \infty,$ and
 $\displaystyle \frac{r(t)Q(t)}{q(t)R(t)}\rightarrow d \geq 0, t \rightarrow \infty$ then
\begin{eqnarray*}
\mathbf{P}\left\{ Y(t)/W(\mu t)\leq x|Y(t)>0\right\} \rightarrow
D_{\gamma ,d}(x),x\geq 0,
\end{eqnarray*}%
where $D_{\gamma }(x)$ has Laplace transform
\begin{eqnarray*}
\hat{D}_{\gamma ,d}(\lambda
)=\frac{d}{1+d}+\frac{1}{1+d}.\frac{1}{1+\lambda^{\gamma }},\lambda
>0.
\end{eqnarray*}
\end{theorem}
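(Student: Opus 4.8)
The plan is to compute the Laplace transform of the conditional law of $Y(t)/W(\mu t)$ given $Y(t)>0$ by evaluating the generating-function quotient $(1-\Phi(t;s(t)))/(1-\Phi(t;0))$ along $s(t)=\exp(-\lambda/W(\mu t))$, exactly as in the proofs of Theorem~\ref{thm5.1}(ii)--(iii), but now tracking the extra slowly varying factors $R(t)$ and $Q(t)$. The governing identities are $1-\Phi(t;s)=1-\exp(-I(t;s))$ with $I(t;s)=\int_0^t r(t-u)q(u;s)du$, together with Lemmas~\ref{lem1} and~\ref{lem2}, Proposition~\ref{proposition}, and the asymptotic $1-e^{-x}\sim x$ as $x\to0$.

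First I would pin down the denominator. Since $\theta=1$ and $\alpha=\gamma$ we are exactly in the borderline regime $\theta=\alpha/\gamma=1$ of Theorem~\ref{thm4-1}, so $\mathbf{P}\{Y(t)>0\}\sim R(t)q(t)+Q(t)r(t)$; using the hypothesis $r(t)Q(t)/(q(t)R(t))\to d$ this becomes $1-\Phi(t;0)\sim (1+d)R(t)q(t)$ as $t\to\infty$. Next, for the numerator, split $I(t;s(t))=\int_0^{t\delta}+\int_{t\delta}^t=I_1(t;s(t))+I_2(t;s(t))$ with $\delta\in(0,1)$ fixed. For $I_1$, monotonicity of $q(\cdot;s)$ in its time argument gives $q(t;s(t))\int_0^{t\delta}r(u)\,du\le I_1(t;s(t))\le q(t(1-\delta);s(t))\int_0^{t\delta}r(u)\,du$; here $\int_0^{t\delta}r(u)\,du=R(t\delta)\sim R(t)$ because $R$ is now slowly varying, and Lemma~\ref{lem2} (applied with $c=1$ and $c=1-\delta$) yields $q(ct;s(t))\sim q(t)(c+\lambda^{-\gamma})^{-1}$ since $\alpha/\gamma=1$. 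Letting $\delta\downarrow0$ gives $I_1(t;s(t))\sim R(t)q(t)(1+\lambda^{-\gamma})^{-1}=R(t)q(t)\lambda^\gamma/(1+\lambda^\gamma)$. For $I_2$, bound $I_2(t;s(t))=\int_{t\delta}^t r(u)q(t-u;s(t))\,du\le r(t\delta)\int_0^{t(1-\delta)}q(u;s(t))\,du\le r(t\delta)\int_0^{t(1-\delta)}q(u)\,du\sim r(t)Q(t)$ (using $\theta=1$ so $r$ is slowly varying and $q(\cdot;s)\le q(\cdot;0)$), hence $I_2(t;s(t))=O(r(t)Q(t))=O(d\,q(t)R(t))$. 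The subtlety is that when $d>0$ this term is \emph{not} negligible relative to $I_1$; however, in that regime $I_2(t;s(t))\to 0$ is not what we want — instead I must show $I_2(t;s(t))\sim d\,R(t)q(t)$, i.e. that replacing $q(t-u;s(t))$ by $q(t-u)$ in the integral is asymptotically exact. This follows because on $[t\delta,t(1-\delta)]$ the argument shift $V(1/(1-s(t)))\sim\lambda^{-\gamma}\mu t$ is of the same order as $\mu(t-u)$, and a dominated-convergence/uniform-convergence argument (as in Lemma~\ref{lem2}, or via the $\Delta$-representation of Proposition~\ref{proposition} together with the fact that $Q(t)\uparrow\infty$ forces the mass of $q(u;s(t))\,du$ over $[0,t(1-\delta)]$ to behave like $Q(t)$) gives $I_2(t;s(t))\sim r(t)Q(t)\sim d\,R(t)q(t)$. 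Combining, $I(t;s(t))\sim R(t)q(t)\big(d+\lambda^\gamma/(1+\lambda^\gamma)\big)\to 0$, so $1-\Phi(t;s(t))\sim R(t)q(t)\big(d+\lambda^\gamma/(1+\lambda^\gamma)\big)$ by $1-e^{-x}\sim x$.

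Dividing by $1-\Phi(t;0)\sim(1+d)R(t)q(t)$ then yields
\[
\lim_{t\to\infty}\mathbf{E}\!\left[e^{-\lambda Y(t)/W(\mu t)}\,\middle|\,Y(t)>0\right]
=\frac{1}{1+d}\left(d+\frac{\lambda^\gamma}{1+\lambda^\gamma}\right)
=\frac{d}{1+d}+\frac{1}{1+d}\cdot\frac{1}{1+\lambda^\gamma},
\]
which is $\hat D_{\gamma,d}(\lambda)$; by the continuity theorem for Laplace transforms this identifies the limiting conditional law of $Y(t)/W(\mu t)$ as $D_{\gamma,d}$, completing the proof. I expect the main obstacle to be the honest justification that $I_2(t;s(t))\sim r(t)Q(t)$ (equivalently, that the normalization $W(\mu t)$ does not distort the ``slow-growth'' part of the population in the borderline case): the crude upper bound $r(t\delta)Q(t)$ only gives the right order, and one genuinely needs the matching lower bound, which requires either splitting $Q(t)=\int_0^{t\delta}q(u)du+\int_{t\delta}^{t(1-\delta)}q(u)du$ and showing the first piece dominates (since $Q(t)$ is slowly varying) together with $q(u;s(t))\sim q(u)$ uniformly for $u\le t\delta$ by Lemma~\ref{lem1}-type reasoning, or a direct Karamata-type estimate. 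Everything else is a routine repetition of the Tauberian bookkeeping already carried out in Theorems~\ref{thm4-1} and~\ref{thm5.1}.
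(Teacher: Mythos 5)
Your treatment of the term you call $I_1$ (the paper's $I_2$) is fine and coincides with the paper's argument: Lemma \ref{lem2} with $\alpha=\gamma$, plus slow variation of $R$, gives $I_1(t;s(t))\sim R(t)q(t)(1+\lambda^{-\gamma})^{-1}$. The genuine gap is your claim that $I_2(t;s(t))\sim r(t)Q(t)\sim d\,R(t)q(t)$, i.e.\ that replacing $q(t-u;s(t))$ by $q(t-u)$ is asymptotically exact. This is false, and the theorem hinges on that term being negligible. Indeed, by (\ref{qts}), for every $u\ge 0$ one has $q(u;s(t))\le q(0;s(t))=1/\Psi\left(1/(1-s(t))\right)\sim\lambda^{\gamma}q(t)$, since $1-s(t)\sim\lambda/W(\mu t)$; the normalization flattens $q(\cdot\,;s(t))$ near the time origin. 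Hence $\int_0^{t(1-\delta)}q(u;s(t))\,du=O\big(t\,q(t)\big)=o(Q(t))$ (recall $tq(t)=o(Q(t))$ because $Q$ is slowly varying and diverges), so $I_2(t;s(t))\le r(t\delta)\,O(tq(t))=o\big(q(t)R(t)\big)$, using $tr(t)=o(R(t))$. The divergence $Q(t)\uparrow\infty$ comes entirely from small arguments $u$, exactly where $q(u;s(t))\not\sim q(u)$, so no dominated-convergence argument can rescue the substitution; and on the middle range $[t\delta,t(1-\delta)]$ the two functions differ by the non-trivial factor $v/(v+\lambda^{-\gamma})$, while that range contributes only $o(r(t)Q(t))$ anyway because $Q$ is slowly varying. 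The correct asymptotics is therefore $I(t;s(t))\sim q(t)R(t)(1+\lambda^{-\gamma})^{-1}$, with no $d$-term in the numerator: $d$ enters only through the denominator $1-\Phi(t;0)\sim(1+d)q(t)R(t)$.

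This error propagates, and your final step is also internally inconsistent: by (\ref{eq19}) the conditional Laplace transform is $1-[1-\Phi(t;s(t))]/[1-\Phi(t;0)]$, not the ratio itself, and the expression you arrive at, $\frac{d}{1+d}+\frac{1}{1+d}\cdot\frac{\lambda^{\gamma}}{1+\lambda^{\gamma}}$, is not equal to $\hat D_{\gamma,d}(\lambda)=\frac{d}{1+d}+\frac{1}{1+d}\cdot\frac{1}{1+\lambda^{\gamma}}$ (it tends to $d/(1+d)<1$ as $\lambda\downarrow 0$, so it cannot be the Laplace transform of the limiting conditional law). With the correct numerator one gets, as in the paper, $[1-\Phi(t;s(t))]/[1-\Phi(t;0)]\to(1+\lambda^{-\gamma})^{-1}/(1+d)$ and hence the conditional Laplace transform converges to $1-\frac{\lambda^{\gamma}}{(1+d)(1+\lambda^{\gamma})}=\hat D_{\gamma,d}(\lambda)$. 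Probabilistically, the atom $d/(1+d)$ at zero comes from the sample paths whose non-extinction is due to the $r(t)Q(t)$ part of $P_t$ and which are of smaller order than $W(\mu t)$; it is produced by the denominator, not by a surviving $d$-contribution in $I(t;s(t))$ as your proposal asserts.
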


\begin{proof} Let $s(t)=\exp (-\lambda /W(\mu t))$ and for $\delta \in (0,1)$ fixed
one has
\begin{eqnarray*}
I(t;s(t))=\int_{0}^{t\delta }+\int_{t\delta
}^{t}=I_{1}(t;s(t))+I_{2}(t;s(t)).
\end{eqnarray*}
Since $q(t,s)$ is non-increasing in $t$ we get that
\begin{eqnarray*}
q(t;s(t))\int_{0}^{t(1-\delta )}r(u)du\leq I_{2}(t;s(t))\leq
q(t\delta ;s(t))\int_{0}^{t(1-\delta )}r(u)du.
\end{eqnarray*}%
Applying Lemma \ref{lem2} with $c=\delta $ and $c=1$ respectively,
one obtains
\begin{eqnarray*}
q(t\delta ;s(t))\sim \frac{q(t)}{\delta +\lambda ^{-\gamma }},\ \ \
\ q(t;s(t))\sim \frac{q(t)}{1+\lambda ^{-\gamma }},\ t\rightarrow
\infty ,
\end{eqnarray*}%
having in mind that $\alpha =\gamma .$ Therefore
\begin{eqnarray}
&&\label{asimpI2} \frac{1}{1+\lambda^{-\gamma}} \le \liminf_{t \to
\infty} \frac{I_2(t;s(t))}{q(t)R(t)}
 \le \limsup_{t \to \infty}\frac{I_2(t;s(t))}{q(t)R(t)} \le \frac{1}{\delta+\lambda^{-\gamma}}.
\end{eqnarray}
Let us consider $I_{1}(t;s(t))$. Since $q(u;s)$ is non-increasing in
$u$ then
\begin{eqnarray*}
q(u;s(t)))\leq q(0;s(t))=\frac{1}{\Psi \left( W\left( V\left( \frac{1}{1-s(t)%
}\right) \right) \right) }=\frac{1}{\Psi \left(
\frac{1}{1-s(t)}\right) },
\end{eqnarray*}
because $W(.)$ and $V(.)$ are inverse to each other. Therefore
\begin{eqnarray*}
&&I_{1}(t;s(t))=\int_{0}^{t\delta }r(t-u)q(u;s(t))du\leq
r(t(1-\delta
))\int_{0}^{t\delta }\frac{1}{\Psi \left( \frac{1}{1-s(t)}\right) }du \\
&\leq &r(t(1-\delta ))\frac{t\delta }{\Psi \left( \frac{1}{1-s(t)}\right) }%
\sim r(t(1-\delta ))\frac{t\delta }{\Psi \left( \frac{W(\mu t)}{\lambda }%
\right) }\sim r(t(1-\delta ))(t\delta )q(t)\lambda ^{\alpha },
\end{eqnarray*}
using the relation $1-\exp (-\lambda /W(\mu t))\sim \lambda /W(\mu
t),t\rightarrow \infty ,$ and the properties of $\Psi (.)$.
Therefore $I_{1}(t;s(t))=O(r(t(1-\delta ))q(t)t),t\rightarrow
\infty.$ Having in mind that $tr(t)=o(R(t))$ we conclude that
$I_{1}(t;s(t))=o(q(t)R(t)),t \rightarrow \infty .$ From here and
(\ref{asimpI2}) we get that
\begin{eqnarray*}
\frac{1}{1+\lambda ^{-\gamma }}\leq
 \liminf_{t\rightarrow \infty }\frac{I(t;s(t))}{q(t)R(t)}\leq
  \limsup_{t\rightarrow \infty }\frac{I(t;s(t))}{q(t)R(t)}
  \leq \frac{1}{\delta +\lambda ^{-\gamma }}.
\end{eqnarray*}%
Since $\delta \in (0,1)$ was arbitrary we conclude that
\begin{eqnarray*}
\lim_{t\rightarrow \infty }I(t;s(t))/[q(t)R(t)]=1/(1+\lambda
^{-\gamma }).
\end{eqnarray*}
Then  $I(t;s(t))\sim q(t)R(t)/[1+\lambda ^{-\gamma }]\rightarrow 0,$
and $1-\Phi (t;s(t))\sim q(t)R(t)/[1+\lambda^{-\gamma }],$ $t \to
\infty.$ Therefore
\begin{eqnarray*}
\frac{1-\Phi (t;s(t))}{1-\Phi (t;0)} \sim \frac{(1+\lambda
^{-\gamma})^{-1}q(t)R(t)}{q(t).R(t)+r(t)Q(t)} \rightarrow
\frac{(1+\lambda ^{-\gamma})^{-1}}{1+d}.
\end{eqnarray*}
which completes the proof of the theorem.
\end{proof}

\begin{com}
\label{com53} The conditional limiting distribution has an atom at
zero with probability $\frac{d}{1+d}.$ Then applying a Tauberian
theorem (as in Comment \ref{com51}) one can obtain that $1-D_{\gamma
,d}(x)\sim x^{-\gamma }/[(1+d)\Gamma (1-\gamma )$, i.e. the limiting
r.v. belongs to a normal
domain of attraction of a stable law with parameter $\gamma .$ Note that $%
\alpha =\gamma $ and $\frac{L_{R}(t)Q(t)}{L_{Q}(t)R(t)}\rightarrow
d. $
\end{com}

%%%%%%%%%%%%%%%%%%%%%%%%%%% THEOREM 5.4 %%%%%%%%%%%%%%%%%%%%%%%%%%%%%%%%%%%%%%%%%%%%%%%%%%%%%%%%%%%%%%%%%%%

\begin{theorem}
\label{thm5.4} Assume the conditions (\ref{rt-0}),
(\ref{infinite-var}), and (\ref{im-fin}) hold.

(i) If $\theta \geq 1$ and $0<\alpha /\gamma <1$ then
\begin{eqnarray*}
\mathbf{P}\left\{ Y(t)/W(\mu t)\leq x|Y(t)>0\right\} \to D(\alpha
,\gamma ;x),x\geq 0.
\end{eqnarray*}
where $D(\alpha ,\gamma ;x),x\geq 0$ is defined in Theorem 5.1(ii).

(ii) If $0<\theta <1$ and $\alpha /\gamma \geq 1$ such that
$\displaystyle Q=\int_{0}^{\infty }q(u)du<\infty $ then
\begin{eqnarray*}
\mathbf{E}\left[ s^{Y(t)}|Y(t)>0\right]  \to 1-\Delta (s)/Q.
\end{eqnarray*}
\end{theorem}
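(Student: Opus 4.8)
The plan is to treat the two cases in parallel with the pattern already established for Theorem 5.1 and Theorem 5.2, controlling the integral $I(t;s)$ (respectively its rescaled version) by splitting the range of integration and applying the uniform convergence theorem for regularly varying functions together with Lemmas \ref{lem1} and \ref{lem2}.

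For part (ii), the situation $0<\theta<1$, $\alpha/\gamma\ge 1$, $Q<\infty$ is essentially the one from Theorem \ref{thm5.2}(i) with the roles of $r$ and $q$ exchanged: here $R(t)\uparrow\infty$ while $Q=\int_0^\infty q(u)du<\infty$, so $r(t)=o(q(t))$ is not what we want — rather, by Theorem \ref{thm4-2}(ii) we already know $P_t=\mathbf{P}\{Y(t)>0\}\sim r(t)Q(t)$, and $r(t)Q(t)/[r(t)Q]\to\infty$ forces the immigration term to dominate. Concretely, I would first note that $Q<\infty$ together with Lemma \ref{lem1} and dominated convergence gives $\Delta(s)=\int_0^\infty q(u;s)\,du<\infty$ for each fixed $s\in[0,1)$, and $\Delta(s)\uparrow Q$ as $s\downarrow 0$. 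Then, splitting $I(t;s)=\int_0^{t\delta}+\int_{t\delta}^t$, the first piece is bounded above and below by $q(\cdot;s)$ evaluated at endpoints times $\int_0^{t\delta}r(u)\,du\sim R(t)$; since $q(t(1-\delta);s)/q(t\delta;s)\to 1$ by the uniform convergence of regularly varying functions this yields $I_1(t;s)\sim R(t)q(t;s)\sim R(t)q(t)$, which is $o(r(t)Q(t))$ because $R(t)q(t)=o(r(t)Q(t))$ (this last asymptotic is exactly the content of Theorem \ref{thm4-2}(ii), or can be checked directly from the indices $\theta<1\le\alpha/\gamma$). The second piece satisfies $I_2(t;s)=\int_{t\delta}^t r(t-u)q(u;s)\,du\le r(t(1-\delta))\int_0^{t\delta}q(u;s)\,du\le r(t(1-\delta))\Delta(s)$, and on the other side $I_2(t;s)\ge r(t(1-\delta))\int_{t\delta}^t q(u;s)\,du=r(t(1-\delta))(\Delta(s)-o(1))$, whence $I_2(t;s)\sim r(t)\Delta(s)$. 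Hence $I(t;s)\sim r(t)\Delta(s)\to 0$, and by $1-e^{-x}\sim x$ we get $1-\Phi(t;s)\sim r(t)\Delta(s)$; dividing by $1-\Phi(t;0)\sim r(t)Q$ and invoking \eqref{eq19} yields $\mathbf{E}[s^{Y(t)}|Y(t)>0]\to 1-\Delta(s)/Q$.

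For part (i), $\theta\ge 1$ and $0<\alpha/\gamma<1$, Theorem \ref{thm4-2}(i) gives $P_t\sim R(t)q(t)$, so the branching (offspring-extinction) term dominates and almost all surviving paths tend to infinity; this forces the normalization $Y(t)/W(\mu t)$. I would take $s(t)=\exp(-\lambda/W(\mu t))$ and again split $I(t;s(t))=I_1+I_2$ at $t\delta$. For $I_1(t;s(t))=\int_0^{t\delta}r(t-u)q(u;s(t))\,du$ one has, by monotonicity of $q(\cdot;s)$ in its first argument, the bracketing $q(t;s(t))\int_0^{t\delta}r(u)\,du\le I_1\le q(t(1-\delta);s(t))\int_0^{t\delta}r(u)\,du$; since $\theta\ge 1$ means $R(t)=\int_0^{t\delta}r(u)\,du$ varies slowly so $R(t\delta)\sim R(t)$, applying Lemma \ref{lem2} with $c=1$ and $c=1-\delta$ gives $I_1(t;s(t))\sim R(t)q(t)(1+\lambda^{-\gamma})^{-\alpha/\gamma}$ after letting $\delta\downarrow 0$. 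For $I_2(t;s(t))=\int_{t\delta}^t r(t-u)q(u;s(t))\,du\le r(t\delta)\int_0^{t(1-\delta)}q(u;s(t))\,du\le r(t\delta)\int_0^{t(1-\delta)}q(u)\,du= r(t\delta)Q(t(1-\delta))$; since $tr(t)=o(R(t))$ and $Q(t)=o(t^{\varepsilon})$ this is $o(R(t)q(t))$. Thus $I(t;s(t))\sim R(t)q(t)(1+\lambda^{-\gamma})^{-\alpha/\gamma}\to 0$, so $1-\Phi(t;s(t))\sim R(t)q(t)(1+\lambda^{-\gamma})^{-\alpha/\gamma}$, and dividing by $1-\Phi(t;0)\sim R(t)q(t)$ gives the Laplace transform $\lambda^\alpha/(1+\lambda^\gamma)^{\alpha/\gamma}$'s complement, i.e. $\hat D(\alpha,\gamma;\lambda)=1-\lambda^\alpha(1+\lambda^\gamma)^{-\alpha/\gamma}$; continuity of the limiting transform at $0$ upgrades convergence of Laplace transforms to weak convergence, which is the assertion.

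The main obstacle is the same bookkeeping point that recurs throughout the paper: in part (i) one must verify that the middle/second block $I_2$ is genuinely negligible compared with $R(t)q(t)$ using only $tr(t)=o(R(t))$ and the slow variation of $Q(t)$ — the worry being that $Q(t)$ could grow and partially cancel the smallness of $r(t\delta)$. Since $q$ has regular-variation index $-\alpha/\gamma<0$, $Q(t)$ grows at most like a slowly varying function if $\alpha/\gamma=1$ and converges if $\alpha/\gamma>1$, so $r(t\delta)Q(t(1-\delta))=o(r(t)t)\cdot o(t^{\varepsilon})$ is still $o(R(t)q(t))$ for every small $\varepsilon$; this needs to be stated carefully but is routine. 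In part (ii) the analogous care is needed to see $R(t)q(t)=o(r(t)Q(t))$, which is just a comparison of the regular-variation indices $-\theta+(-\alpha/\gamma)+1$ against $0$ — negative precisely because $\theta<1\le\alpha/\gamma$ — so no genuine difficulty arises there.
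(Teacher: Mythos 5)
Your overall strategy is the paper's: take $s(t)=\exp(-\lambda/W(\mu t))$ in (i), split $I(t;s)$ at $t\delta$, use Lemma \ref{lem2} for the main block and $tr(t)=o(R(t))$ (resp.\ $tq(t)=o(Q(t))$) for the remainder, then divide by $1-\Phi(t;0)$ as given by Theorem \ref{thm4-2}. However, two of your supporting steps are wrong as written. In part (i) you dismiss the remainder via ``$Q(t)=o(t^{\varepsilon})$'', and your closing paragraph justifies this by saying $Q(t)$ grows at most slowly (if $\alpha/\gamma=1$) or converges (if $\alpha/\gamma>1$) --- but those are the hypotheses of part (ii); in part (i) one has $0<\alpha/\gamma<1$, so $q$ has index $-\alpha/\gamma\in(-1,0)$ and $Q(t)$ is regularly varying of \emph{positive} index $1-\alpha/\gamma$, hence not $o(t^{\varepsilon})$. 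The conclusion you need is still true, but the correct (and the paper's) route is Karamata: $\int_0^{t\delta}q(u)\,du\sim t\delta\,q(t\delta)/(1-\alpha/\gamma)$, so the remainder is $O\bigl(t\,r(t)q(t)\bigr)=o\bigl(R(t)q(t)\bigr)$ because $tr(t)=o(R(t))$ when $R$ is slowly varying ($\theta\ge 1$). As stated, your justification would not survive scrutiny.

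In part (ii) the bounds for the main block are garbled by mixing the two ways of writing the convolution $I(t;s)=\int_0^t r(t-u)q(u;s)\,du=\int_0^t r(u)q(t-u;s)\,du$. With the integrand you actually wrote, $\int_{t\delta}^{t}q(u;s)\,du$ is the tail of the convergent integral $\Delta(s)$ and is therefore $o(1)$, not $\Delta(s)-o(1)$; so your lower bound $I_2(t;s)\ge r(t(1-\delta))\bigl(\Delta(s)-o(1)\bigr)$ is false as written, and the upper chain pairs the wrong endpoint of $r$ with the wrong range of $q$. (Similarly, $\int_0^{t\delta}r(u)\,du\sim R(t)$ fails for $\theta<1$, where $R$ has index $1-\theta>0$; this is harmless only because that block is an error term.) The fix is exactly the paper's bookkeeping: keep $\int_0^{t\delta}r(t-u)q(u;s)\,du$ as the main block, sandwich it between $r(t)\int_0^{t\delta}q(u;s)\,du$ and $r(t(1-\delta))\int_0^{t\delta}q(u;s)\,du$ with $\int_0^{t\delta}q(u;s)\,du\to\Delta(s)$, and bound the other block by $q(t\delta)\int_0^{t}r(u)\,du\sim q(t\delta)\,tr(t)/(1-\theta)=o(r(t)Q(t))$ using $tq(t)=o(Q(t))$. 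Also, ``$R(t)q(t)=o(r(t)Q(t))$ is exactly the content of Theorem \ref{thm4-2}(ii)'' overstates that theorem (which asserts $P_t\sim r(t)Q(t)$); the comparison itself is a one-line Karamata/index check, so state it as such. With these corrections your argument coincides with the paper's proof.
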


\begin{proof} (i) Under the conditions of this case one has
$R(t)\rightarrow R\leq \infty $, $R(t)$ is a s.v.f. and
$tr(t)=o(R(t)),\ t\rightarrow \infty $. Let $s(t)=1-\exp (-\lambda
/W(\mu t))$ and consider for any fixed $\delta \in (0,1)$
\begin{eqnarray*}
I(t;s(t))=\int_{0}^{t\delta }+\int_{t\delta
}^{t}=I_{1}(t;s(t))+I_{2}(t;s(t)).
\end{eqnarray*}
Hence
\begin{equation}
q(t;s(t))\int_{0}^{t(1-\delta )}r(u)du\leq I_{2}(t;s(t))\leq
q(t\delta ;s(t))\int_{0}^{t(1-\delta )}r(u)du  \label{ineq-i2}.
\end{equation}
Applying Lemma \ref{lem2} with $c=\delta $ and $c=1$ respectively,
one gets
\begin{eqnarray*}
q(t\delta ;s(t))\sim q(t)(\delta +\lambda ^{-\gamma })^{-\alpha
/\gamma },\ q(t;s(t))\sim q(t)(1+\lambda ^{-\gamma })^{-\alpha
/\gamma },t\rightarrow \infty .
\end{eqnarray*}%
These two relations and (\ref{ineq-i2}) provided that
\begin{eqnarray*}
(1+\lambda ^{-\gamma })^{-\alpha /\gamma }\leq \liminf_{t\rightarrow \infty }%
\frac{I_{2}(t;s(t))}{q(t).R(t)}\leq \limsup_{t\rightarrow \infty }\frac{%
I_{2}(t;s(t))}{q(t).R(t)}\leq (\delta +\lambda ^{-\gamma })^{-\alpha
/\gamma }.
\end{eqnarray*}%
On the other hand we have
\begin{eqnarray*}
&&0\leq I_{1}(t;s(t))=\int_{0}^{t\delta }r(t-u)q(u;s(t))du\leq
r(t(1-\delta
))\int_{0}^{t\delta }q(u)du \\
&\sim &r(t)(1-\delta )^{\theta }\frac{t\delta q(t\delta )}{1-\alpha /\gamma }%
\sim \frac{r(t)(1-\delta )^{\theta }tq(t)\delta ^{1-\alpha /\gamma }}{%
1-\alpha /\gamma }=o(R(t).q(t)),t\rightarrow \infty ,
\end{eqnarray*}%
remember that $tr(t)=o(R(t)),t\rightarrow \infty $. Therefore,
\begin{eqnarray*}
(1+\lambda ^{-\gamma })^{-\alpha /\gamma }\leq \liminf_{t\rightarrow \infty }%
\frac{I(t;s(t))}{q(t).R(t)}\leq \limsup_{t\rightarrow \infty }\frac{I(t;s(t))%
}{q(t).R(t)}\leq (\delta +\lambda ^{-\gamma })^{-\alpha /\gamma }.
\end{eqnarray*}%
Since $\delta \in (0,1)$ was arbitrary, we get
\begin{eqnarray*}
I(t;s(t))\sim (1+\lambda ^{-\gamma })^{-\alpha /\gamma
}q(t).R(t)\rightarrow 0,t\rightarrow \infty ,
\end{eqnarray*}%
for any fixed $\lambda >0$. Therefore
\begin{eqnarray*}
1-\Phi (t;s(t))\sim I(t;s(t))\sim (1+\lambda ^{-\gamma })^{-\alpha
/\gamma}(1-\Phi (t;0)).
\end{eqnarray*}%
This relation and (\ref{eq19}) compete the proof of this case.

(ii) Let $\delta \in (0,1)$ be fixed and
\begin{eqnarray}
&&\label{44ii-1} I(t;s)=\int_0^t r(t-u)q(u;s)du=\int_0^{t
\delta}+\int_{t \delta}^t=I_1(t;s)+I_2(t;s).
\end{eqnarray}
Then
\begin{eqnarray*}
r(t)\int_{0}^{t\delta }q(u;s)du\leq I_{1}(t;s)\leq
r(t(1-\delta))\int_{0}^{t\delta }q(u;s)du.
\end{eqnarray*}
Since $Q<\infty $ then by Lemma \ref{lem1} it follows that
$\displaystyle \Delta (s)=\int_{0}^{\infty }q(u;s)du<\infty $ and by
the dominated convergence theorem $\Delta (s)\rightarrow Q$, as
$s\rightarrow 0$. Therefore,
\begin{eqnarray}
&&\label{44ii-2} \frac{ \Delta(s)}{Q} \le \liminf_{t \to \infty}
\frac{I_1(t;s)}{r(t)Q(t)} \limsup_{t \to
\infty}\frac{I_1(t;s)}{r(t)Q(t)}  \le
(1-\delta)^{-\theta}\frac{\Delta(s)}{Q}.
\end{eqnarray}
Having in mind that $q(t;s)$ is non-increasing in $t\geq 0$ and
$s\in [0,1)$ we obtain
\begin{eqnarray*}
I_{2}(t;s)=\int_{t\delta }^{t}r(t-u)q(u;s)du\leq q(t\delta
)\int_{0}^{t}r(u)du\sim q(t\delta )\frac{tr(t)}{1-\theta },
\end{eqnarray*}
because $r(t)$ varies regularly with exponent $-\theta \in (-1,0)$.
Therefore
\begin{eqnarray}
&&\label{44ii-3} \frac{I_2(t;s)}{r(t)Q(t)} \le\frac{
tq(t\delta)r(t)}{(1-\theta)r(t)Q(t)} \to \ 0, t \to \infty,
\end{eqnarray}
because in this case $tq(t)=o(Q(t)),t\rightarrow \infty .$ From
(\ref{44ii-1}), (\ref{44ii-2}), and (\ref{44ii-3}) it follows that
\begin{eqnarray*}
\frac{\Delta (s)}{Q} \leq \liminf_{t\rightarrow \infty
}\frac{I(t;s)}{r(t)Q(t)}\limsup_{t\rightarrow \infty
}\frac{I(t;s)}{r(t)Q(t)} \leq (1-\delta)^{-\theta }\frac{\Delta
(s)}{Q},\ \ s\in [0,1).
\end{eqnarray*}
Since $\delta $ was arbitrary, we get that $I(t;s)\sim
\Delta(s)r(t)Q(t)/Q \rightarrow 0,  t\rightarrow \infty.$ Therefore
$1-\Phi(t;s)\sim \Delta (s)r(t)Q(t)/Q\rightarrow 0,  t\rightarrow
\infty,$ which together with Theorem 4.2(ii) and (\ref{eq19})
completes the proof.
\end{proof}
\begin{com}
\label{com54} Note that the obtained limiting distributions are the
same as in Theorem \ref{thm5.2} (respectively Theorem \ref{thm5.1} -
(ii) and (i)) nevertheless that the conditions and methods of the
proofs are different.
\end{com}

%%%%%%%%%%%%%%%%%%%%%%%%%%% THEOREM 5.5 %%%%%%%%%%%%%%%%%%%%%%%%%%%%%%%%%%%%%%%%%%%%%%%%%%%%%%%%%%%%%%%%%%%

\begin{theorem}
\label{thm5.5} Assume the conditions (\ref{rt-0}),
(\ref{infinite-var}), and (\ref{im-fin}) hold with $0<\theta <1$ and
$0<\alpha /\gamma <1$.\newline (i) If $tr(t)q(t)\rightarrow 0$ then
\begin{eqnarray*}
\mathbf{P}\left\{ Y(t)/W(t)\leq x|Y(t)>0\right\} \to D_{1}(\theta
,\alpha ,\gamma ;x),x\geq 0,
\end{eqnarray*}%
where $D_{1}(\theta ,\alpha ,\gamma ;x)$ has Laplace transform
\begin{eqnarray*}
\hat{D_{1}}(\theta ,\alpha ,\gamma ;\lambda )= 1-\frac{\lambda
^{\alpha }}{B(1-\theta ,1-\frac{\alpha }{\gamma
})}\int_{0}^{1}(1-u)^{-\theta }(u\lambda^{\gamma }+1)^{-\alpha
/\gamma }du.
\end{eqnarray*}
(ii) If $tr(t)q(t)\rightarrow K\in (0,\infty )$ then
\begin{eqnarray*}
\mathbf{P}\left\{ Y(t)/W(t)\leq x|Y(t)>0\right\} \to D_{2}(\theta
,\alpha ,\gamma ;x),x\geq 0,
\end{eqnarray*}
where $D_{2}(\theta,\alpha,\gamma ;x)$ has Laplace transform
\begin{eqnarray*}
\hat{D_{2}}(\theta ,\alpha ,\gamma ;\lambda ) =1-\frac{1-\exp
\left(-K\lambda ^{\alpha }\int_{0}^{1}(1-u)^{-\theta }(u\lambda
^{\gamma}+1)^{1-\theta }du\right)}{1-\exp\left( -K.B(\theta
,1-\theta )\right) }.
\end{eqnarray*}
(iii) If $tr(t)q(t)\rightarrow \infty $ then
\begin{eqnarray*}
\mathbf{P}\left\{ Y(t)/\overleftarrow{\Psi }(W(t))\leq x\right\} \to
D_{3}(\alpha ;x), x \geq 0,
\end{eqnarray*}
where $D_{3}(\theta,\alpha,\gamma ;x)$ has Laplace transform
$\hat{D_{3}}(\alpha ;\lambda)=e^{-\lambda ^{\alpha }},\ \lambda >0.$
\end{theorem}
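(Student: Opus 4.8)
The plan is to analyze the integral $I(t;s(t)) = \int_0^t r(t-u) q(u;s(t))\,du$ along the appropriate normalizing sequence $s(t)$ in each of the three regimes, exactly as in the proofs of Theorems 5.1--5.4, and then translate the asymptotics of $I(t;s(t))$ into the asymptotics of $1-\Phi(t;s(t)) = 1 - \exp(-I(t;s(t)))$ and finally into the conditional Laplace transform via $(\ref{eq19})$. In all three cases I would set $s(t)=\exp(-\lambda/W(\mu t))$ (for (i) and (ii)) or $s(t)=\exp(-\lambda/\overleftarrow\Psi(W(\mu t)))$ (for (iii)), so that $1-s(t)$ has the required regularly varying rate of decay, and substitute $u=vt$ to write $I(t;s(t)) = t\int_0^1 r(t(1-v))\,q(tv;s(t))\,dv$. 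The key input is a pointwise-in-$v$ asymptotic for the integrand obtained from Lemma \ref{lem2} and the regular variation of $r(\cdot)$: for fixed $v\in(0,1)$, $r(t(1-v))\sim r(t)(1-v)^{-\theta}$ and $q(tv;s(t))\sim q(t)(v+\lambda^{-\gamma})^{-\alpha/\gamma}$ as $t\to\infty$, so that $r(t(1-v))q(tv;s(t))/(r(t)q(t)) \to (1-v)^{-\theta}(v+\lambda^{-\gamma})^{-\alpha/\gamma}$. A dominated-convergence / uniform-convergence argument on compact subintervals $[\delta,1-\delta]$, together with the boundary estimates for $\int_0^{t\delta}$ and $\int_{t(1-\delta)}^t$ analogous to $(\ref{thm33I1})$--$(\ref{thm33I2})$ (which are $o$ of the main term once $\delta\downarrow0$), then yields
\[
I(t;s(t)) \sim t r(t) q(t)\int_0^1 (1-v)^{-\theta}(v+\lambda^{-\gamma})^{-\alpha/\gamma}\,dv
= t r(t) q(t)\,\lambda^{\alpha}\!\!\int_0^1 (1-u)^{-\theta}(u\lambda^{\gamma}+1)^{-\alpha/\gamma}\,du,
\]
using $(v+\lambda^{-\gamma})^{-\alpha/\gamma} = \lambda^{\alpha}(v\lambda^{\gamma}+1)^{-\alpha/\gamma}$.

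In case (i), $tr(t)q(t)\to0$ forces $I(t;s(t))\to0$, so $1-\Phi(t;s(t))\sim I(t;s(t))$ by $1-e^{-x}\sim x$; combining with $(\ref{yanev1})$, Theorem \ref{thm4-3}(i) (which gives $1-\Phi(t;0)\sim tr(t)q(t)\,\mathbb{B}(1-\alpha/\gamma,1-\theta)$), and $(\ref{eq19})$, the $tr(t)q(t)$ factors cancel and one obtains the stated $\hat D_1$ (the normalizing constant being exactly $B(1-\theta,1-\alpha/\gamma)$ from the case $\lambda$-free integral). In case (ii), $tr(t)q(t)\to K$ means $I(t;s(t))$ converges to a finite positive limit $K\lambda^{\alpha}\int_0^1(1-u)^{-\theta}(u\lambda^{\gamma}+1)^{-\alpha/\gamma}\,du$, so now I must keep $1-\Phi(t;s(t)) = 1-\exp(-I(t;s(t)))$ without linearizing; dividing by $1-\Phi(t;0) = 1-\exp(-I(t;0))$ with $I(t;0)\to K\,B(1-\alpha/\gamma,1-\theta)$ gives the ratio-of-exponentials formula for $\hat D_2$. (I note the apparent exponent $1-\theta$ in the statement of $\hat D_2$ in place of $-\alpha/\gamma$ may be a typo for $-\alpha/\gamma$, matching what this computation produces; I would present the proof with the exponent that the calculation yields.) Case (iii) requires the unconditional statement: here $tr(t)q(t)\to\infty$ means $\mathbf{P}\{Y(t)>0\}\to1$ by Theorem \ref{thm4-3}(iii), so conditioning is vacuous; I would compute $I(t;s(t))$ directly with $s(t)=\exp(-\lambda/\overleftarrow\Psi(W(\mu t)))$, using $\Psi(\overleftarrow\Psi(x))=x$ and the regular variation of $\Psi,\overleftarrow\Psi,W$ together with Lemma \ref{lem2}-type estimates, to show $I(t;s(t))\to\lambda^{\alpha}$, whence $\mathbf{E}[e^{-\lambda Y(t)/\overleftarrow\Psi(W(t))}]=\Phi(t;s(t))=\exp(-I(t;s(t)))\to e^{-\lambda^{\alpha}}$, which is the Laplace transform of the stable-$\alpha$ law $D_3$.

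The main obstacle I anticipate is making the interchange of limit and integral fully rigorous uniformly in the regime, i.e. controlling the integrand $r(t(1-v))q(tv;s(t))/(r(t)q(t))$ near the endpoints $v=0$ and $v=1$: near $v=1$ the factor $(1-v)^{-\theta}$ is integrable since $\theta<1$, and near $v=0$ the factor $(v+\lambda^{-\gamma})^{-\alpha/\gamma}$ is bounded (this is where $s(t)\to1$ slowly, rather than $s\equiv0$, helps — it regularizes the singularity at $v=0$), but the Potter-bound uniformity of the slowly varying parts $L_R,L_Q$ of $r,q$ over the range $v\in(0,1)$ must be invoked carefully, and the contributions of $I_1(t;s(t))$ and $I_3(t;s(t))$ must be shown to be genuinely negligible after $\delta\downarrow0$. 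In case (iii) the additional subtlety is that the normalization $\overleftarrow\Psi(W(t))$ mixes the reproduction rate (through $W$) and the immigration tail (through $\overleftarrow\Psi$), so I must verify that $V([1-s(t)]^{-1}) = V(\overleftarrow\Psi(W(\mu t))/\lambda + o(\cdot))$ is of smaller order than $\mu t$ — this is what collapses the Lemma \ref{lem2}-type calculation to the clean limit $\Psi(W(c\mu t + V([1-s(t)]^{-1}))) \sim \Psi(\overleftarrow\Psi(W(\mu t))\lambda^{-1})\cdot(\text{const})$, ultimately giving the factor $\lambda^{\alpha}$.
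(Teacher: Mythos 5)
For parts (i) and (ii) your proposal is essentially the paper's own proof: the same choice $s(t)=\exp(-\lambda/W(\mu t))$, the same split of $I(t;s(t))$ with the substitution $u=vt$ in the middle piece, Lemma \ref{lem2} together with uniform convergence of regularly varying functions on $[\delta,1-\delta]$, the boundary bounds (\ref{thm33I1})--(\ref{thm33I2}), and then Theorem \ref{thm4-3}(i)/(ii) with (\ref{eq19}); in (ii) the paper likewise keeps $1-\exp(-I)$ unlinearized, and your reading of the exponents in $\hat D_2$ as a misprint is consistent with what the computation yields, namely the integrand $(1-u)^{-\theta}(u\lambda^{\gamma}+1)^{-\alpha/\gamma}=(1-u)^{-\theta}(u\lambda^{\gamma}+1)^{\theta-1}$ when $\theta+\alpha/\gamma=1$.

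Part (iii), however, contains a genuine gap. You adopt the normalization as printed and set $s(t)=\exp(-\lambda/\overleftarrow{\Psi}(W(\mu t)))$, asserting that the Lemma \ref{lem2}-type calculation gives $I(t;s(t))\to\lambda^{\alpha}$. It does not. With this choice, $V(1/(1-s(t)))\sim\lambda^{-\gamma}V(\overleftarrow{\Psi}(W(\mu t)))$ is regularly varying in $t$ with exponent $\gamma\cdot\frac{1}{\alpha}\cdot\frac{1}{\gamma}=1/\alpha>1$, so it \emph{dominates} $\mu t$ (your stated check, that $V([1-s(t)]^{-1})$ be of smaller order than $\mu t$, is both false for your choice and the reverse of the comparison actually needed). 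Consequently $q(u;s(t))\sim 1/\Psi\left(1/(1-s(t))\right)\sim\lambda^{\alpha}/\Psi\left(\overleftarrow{\Psi}(W(\mu t))\right)=\lambda^{\alpha}/W(\mu t)$ uniformly in $0\le u\le t$, hence $I(t;s(t))\sim\lambda^{\alpha}R(t)/W(\mu t)\to 0$, since $R$ has index $1-\theta<1\le 1/\gamma$. This gives $\Phi(t;s(t))\to 1$, i.e. $Y(t)/\overleftarrow{\Psi}(W(\mu t))\to 0$ in probability, not the stable-$\alpha$ law.

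The paper's proof uses a different normalization: $s(t)=\exp(-\lambda/\overleftarrow{\Psi}(R(t)))$, so the theorem's displayed $\overleftarrow{\Psi}(W(t))$ is evidently a misprint for $\overleftarrow{\Psi}(R(t))$ (these can never agree here, their exponents being $1/(\alpha\gamma)$ versus $(1-\theta)/\alpha$ with $\gamma\le1$, $\theta>0$). With the paper's choice one has $\Psi\left(1/(1-s(t))\right)\sim\lambda^{-\alpha}R(t)$, and since $q(t)R(t)\sim q(t)tr(t)/(1-\theta)\to\infty$ in this regime, $\mu t=V\left(\overleftarrow{\Psi}(1/q(t))\right)=o\left(V(1/(1-s(t)))\right)$; therefore $q(u;s(t))R(t)\to\lambda^{\alpha}$ for $u$ of order $t$, the piece over $[0,t\delta]$ contributes only $O(\delta)\lambda^{\alpha}$, and $I(t;s(t))\to\lambda^{\alpha}$, whence $\Phi(t;s(t))\to e^{-\lambda^{\alpha}}$ unconditionally. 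To repair your part (iii) you must replace $\overleftarrow{\Psi}(W(\mu t))$ by $\overleftarrow{\Psi}(R(t))$ and run exactly this comparison, rather than the one you describe.
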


\begin{proof} (i) Let us denote $s(t)=\exp(-\lambda/W(\mu t))$.
Let $\delta \in (0,\frac{1}{2})$ be fixed. Consider
\begin{eqnarray*}
I(t;s(t))=\int_{0}^{t\delta }+\int_{t\delta
}^{t(1-\delta)}+\int_{t(1-\delta
)}^{t}=I_{1}(t;s(t))+I_{2}(t;s(t))+I_{3}(t;s(t)).
\end{eqnarray*}%
By changing variables $u=vt$ in $I_2(t;s(t))$ we obtain
\begin{eqnarray*}
I_{2}(t;s(t))=\int_{t\delta }^{t(1-\delta
)}r(t-u)q(u;s(t))du=t\int_{\delta}^{1-\delta }r(t(1-v))q(tv;s(t))dv,
\end{eqnarray*}
Applying Lemma \ref{lem2} we have $q(vt;s(t)) \sim q(t)\left(
v+\lambda^{-\gamma }\right)^{-\alpha /\gamma },\ t\rightarrow \infty
.$ Therefore
\begin{eqnarray*}
&&I_{2}(t;s(t))=tr(t)q(t)\int_{\delta }^{1-\delta }\frac{r(t(1-v))}{r(t)}\frac{q(tv;s(t))}{q(t)}dv \\
&\sim &tr(t)q(t)\int_{\delta }^{1-\delta }(1-v)^{-\theta
}(v+\lambda^{-\gamma })^{-\alpha /\gamma }dv\rightarrow 0,\ \
t\rightarrow \infty ,
\end{eqnarray*}
by the uniform convergence of the r.v.f. on compact set $[\delta
,1-\delta ]$. Further
\begin{eqnarray*}
I_{1}(t;s(t)) &=&\int_{0}^{t\delta }r(t-u)q(u;s(t))du\leq \int_{0}^{t\delta }r(t-u)q(u))du \\
I_{3}(t;s(t)) &=&\int_{t(1-\delta )}^{t}r(t-u)q(u;s(t))du\leq
\int_{0}^{t\delta }r(u)q(t-u))du,
\end{eqnarray*}
because $q(t;s)$ is non-increasing in $s\in [0,1]$. By the last
three relations, having also in mind (\ref{thm33I1}) and
(\ref{thm33I2}), and the fact that $\delta $ was arbitrary, we
conclude that
\begin{eqnarray*}
I(t;s(t)\sim tr(t)q(t)\int_{0}^{1}(1-v)^{-\theta }(v+\lambda
^{-\gamma })^{-\alpha /\gamma }dv\rightarrow 0,t\rightarrow \infty .
\end{eqnarray*}%
Therefore $1-\Phi (t;s(t))\sim tr(t)q(t)\int_{0}^{1}(1-v)^{-\theta
}(v+\lambda ^{-\gamma })^{-\alpha /\gamma }dv,t\rightarrow \infty ,$
which together with (\ref{eq19}) and Theorem 4.3(i) completes the
proof of this case.

(ii) Setting as above $s(t)=\exp(-\lambda/W(\mu t))$ and having in
mind that in this case $\theta+ \alpha/\gamma=1$, in the same way as
in the previous case we obtain that as $t \to \infty$,
\begin{eqnarray*}
&& I(t,s(t)) \sim
tq(t)r(t)\lambda^\alpha\int_0^1(1-u)^\theta(u\lambda^\gamma+1)^{1-\theta}du
\\
&\to& K
\lambda^\alpha\int_0^1(1-u)^\theta(u\lambda^\gamma+1)^{1-\theta}du.
\end{eqnarray*}
Then (see (\ref{1minusfits}) and (\ref{Its})),
\begin{eqnarray*}
&& \Phi(t,s(t)) \to \exp\left(-K
\lambda^\alpha\int_0^1(1-u)^\theta(u\lambda^\gamma+1)^{1-\theta}du\right),
\ \ t \to \infty.
\end{eqnarray*}
This limit together with Theorem 4.3(ii) and (\ref{eq19}) completes
the proof of the case (ii).

(iii) Denote by $s(t)=\exp(-\lambda /\overleftarrow{\Psi }(R(t)))$.
Remember
that $\overleftarrow{\Psi }(.)$ is the inverse function of $\Psi(.)$. For $%
\delta \in (0,1)$ we consider
\begin{eqnarray*}
&& I(t;s(t))=\int_{0}^{t}r(t-u)q(u;s(t))du=\int_{0}^{t\delta
}+\int_{t\delta}^{t}=I_{1}(t;s(t))+I_{2}(t;s(t)).
\end{eqnarray*}
Since $q(t;s)$ is non-increasing in $t\geq 0$ we have
\begin{eqnarray*}
&& q(t;s(t))R(t(1-\delta ))\leq I_{2}(t;s(t))\leq q(t\delta
;s(t))R(t).
\end{eqnarray*}
From  (\ref{gFt}) it follows that $\displaystyle 1/q(t) = \Psi
(W(\mu t)).$ Then
 $\displaystyle  \mu t\sim V\left( \overleftarrow{\Psi }\left(1/q(t)\right) \right),$ as
$t \to \infty.$ We have also that  $\displaystyle 1-s(t)\sim
\lambda/\overleftarrow{\Psi }(R(t)).$ Therefore,
\begin{eqnarray*}
\frac{\mu t}{V\left( 1/(1-s(t))\right) }\sim
\frac{V\left(\overleftarrow{\Psi }\left(1/q(t)\right) \right)
}{V\left( \overleftarrow{\Psi }(R(t))/\lambda\right) } \sim \lambda
^{-\gamma }\frac{V\left( \overleftarrow{\Psi }\left( 1/q(t)\right)
\right) }{V\left(\overleftarrow{\Psi }(R(t))\right) } \sim
\frac{\lambda ^{-\gamma }}{q(t)R(t)}\rightarrow 0.
\end{eqnarray*}%
because $\displaystyle q(t)R(t)\sim q(t)\frac{tr(t)}{\theta
+1}\rightarrow \infty $. From this relation (see also (\ref{qts}),
using the uniform convergence of regularly varying functions we get
that as $t\rightarrow \infty$,
\begin{eqnarray*}
&&q(t\delta ;s(t))R(t)=\frac{R(t)}{\Psi \left( W\left( V\left( 1/(1-s(t))\right) \left( \frac{\mu \delta t}{V(1/(1-s(t))}+1\right)\right) \right) } \\
&\sim &\frac{R(t)}{\Psi \left( W\left(
V\left(1/(1-s(t))\right)\right) \right) }
\sim \frac{R(t)}{\Psi (1/(1-s(t)))}\sim \frac{R(t)}{\Psi \left( \overleftarrow{\Psi }(R(t))/\lambda \right) } \\
&\sim &\lambda ^{\alpha }\frac{R(t)}{\Psi \left( \overleftarrow{\Psi
}(R(t))\right) } \sim \lambda ^{\alpha
}R(t).\frac{1}{R(t)}\rightarrow \lambda^{\alpha }.
\end{eqnarray*}
In the same way one has that $q(t\delta ;s(t))R(t(1-\delta
))\rightarrow \lambda ^{\alpha }(1-\delta )$ as $t\rightarrow \infty
.$ Having in mind that $W(.)$ is increasing and $V(x)>0$ for any
$x>1$, we get
\begin{eqnarray*}
&&I_{1}(t;s(t))=\int_{0}^{t\delta }r(t-u)q(u;s(t))du\leq r(t)\int_{0}^{t\delta }q(u;s(t))du \\
&=&r(t)\int_{0}^{t\delta }\frac{du}{\Psi \left( W\left( \mu
u+V\left(1/(1-s(t))\right) \right) \right) }
\leq r(t)\int_{0}^{t\delta }\frac{du}{\Psi \left( W\left( V\left(1/(1-s(t))\right) \right) \right) } \\
&=&\frac{t\delta r(t)}{\Psi \left(1/(1-s(t))\right) }\sim
\frac{t\delta r(t)}{\Psi \left( \overleftarrow{\Psi }(R(t))/\lambda
\right) } \sim \delta (1+\theta )R(t)\frac{\lambda^{\alpha }}{R(t)}
\rightarrow \delta (1+\theta )\lambda ^{\alpha }.
\end{eqnarray*}%
Therefore
\begin{eqnarray*}
(1-\delta )^{\theta +1}\lambda ^{\alpha }\leq \liminf_{t\rightarrow
\infty }I(t;s(t))\leq \limsup_{t\rightarrow \infty }I(t;s(t))\leq
\lambda ^{\alpha }+\delta (1+\theta )\lambda ^{\alpha }.
\end{eqnarray*}%
Since $\delta $ was arbitrary then $\displaystyle \lim_{t\rightarrow
\infty }I(t;s(t))=\lambda ^{\alpha },$ which completes the proof.
\end{proof}

\begin{com}
\label{com55} (i) In this case
$tr(t)q(t)=L_{R}(t)L_{Q}(t)t^{1-\theta -\alpha /\gamma }\rightarrow
0,$ \ which means that $\theta +\alpha /\gamma <1$ or $\theta
+\alpha /\gamma =1$\ but $L_{R}(t)L_{Q}(t)\rightarrow 0.$ The
normalizing function is regularly varying with parameter $1/\gamma $
and we obtain a conditional limiting distribution $D_{1}(x).$ Then
by the Tauberian theorem
\begin{eqnarray*}
&& 1-D_{1}(x)\sim x^{-\alpha }/[\Gamma (1-\alpha )(1-\theta
)B(1-\theta,1-\alpha /\gamma )],\
 x \rightarrow \infty,
\end{eqnarray*}
i.e. the limiting r.v. belongs to a normal domain of attraction of a
stable law with parameter $\alpha .$

(ii) In this case $tr(t)q(t)=L_{R}(t)L_{Q}(t)\rightarrow K\in
(0,\infty ),$ because $\theta +\alpha /\gamma =1.$ Then, with the
same normalizing function as in the previous case we obtain
similarly that
\begin{equation*}
1-D_{2}(x)\sim Kx^{-\alpha }/\{\Gamma (1-\alpha )(1-\theta
)[1-\exp(-K.B(\theta ,1-\theta ))]\}, \ \ x\rightarrow \infty ,
\end{equation*}
(iii) In this case $\theta +\alpha /\gamma <1$\textit{\ or }$\theta
+\alpha /\gamma =1$ but $L_{R}(t)L_{Q}(t)\rightarrow \infty .$ \ The
normalization is by a s.v.f. with parameter $\theta /\alpha $ and
surprisingly the non-conditional limiting distribution is just
stable with parameter $\alpha$.
\end{com}

\section{Concluding remarks}

As it was shown, the asymptotic behavior of the non-visiting zero
probability and the limiting distributions depend of the relations
between parameters of reproduction ($\gamma ),$ of immigration
($\alpha )$ and of the decreasing Poisson intensity ($\theta ),$ as
well as from the corresponding s.v.f. (in some cases). The
probability for non-visiting zero
state converges to zero (with different rates), or to positive constant in $%
(0,1)$ which is exact calculated, or finally to 1. We obtained eight
different limiting distributions under the suitable normalization.
An interesting case is given in Theorem 5.1-(iii) where we obtained
two singular to each other conditional limiting distributions (with
different normalizing functions). Another interesting situation is
presented in Theorem \ref{thm5.5}-(iii) where we obtained
non-conditional limiting distribution which is just stable with
parameter $\alpha.$ Note that the
intensity $r(t)$ can be interpreted as a control function. The case when $%
r(t)$ is increasing is quite different and the obtained results are
accepted for publication in \cite{MitYan2}.

\end{document}